\theoremstyle{plain}
\newtheorem*{claim*}{Claim}
\newtheorem{thm}{Theorem}[section]
\newtheorem{cor}[thm]{Corollary}
\newtheorem{lemma}[thm]{Lemma}
\newtheorem{prop}[thm]{Proposition}
\theoremstyle{definition}
\newtheorem{defn}[thm]{Definition}
\newtheorem{ex}[thm]{Example}
\newtheorem{remark}[thm]{Remark}
\newtheorem{prob}[thm]{Open Problem}
\newcommand{\ar}{\mbox{${\mathcal{R}}$}}
\newcommand{\el}{\mbox{${\mathcal{L}}$}}
\newcommand{\h}{\mbox{${\mathcal{H}}$}}
\newcommand{\dee}{\mbox{${\mathcal{D}}$}}
\newcommand{\jay}{\mbox{${\mathcal{J}}$}}
\newcommand{\leqr}{\leq_{\mathcal{R}}}
\newcommand{\HR}{H_{\mathcal{R}}}
\newcommand{\HL}{H_{\mathcal{L}}}
\newcommand{\HH}{H_{\mathcal{H}}}
\newcommand{\HJ}{H_{\mathcal{J}}}
\newcommand{\Z}{\mathbb{Z}}
\newcommand{\N}{\mathbb{N}}
\begin{document}
%\subjclass[2020]{20M10, 20M12}
\title{\large{The $\mathcal{R}$-height of semigroups and their bi-ideals}}
\author{Craig Miller}
\address{Department of Mathematics, University of York, UK, YO10 5DD}
\email{craig.miller@york.ac.uk}
\maketitle

\begin{abstract} 
The $\ar$-height of a semigroup $S$ is the height of the poset of $\ar$-classes of $S.$  Given a semigroup $S$ with finite $\ar$-height, we establish bounds on the $\ar$-height of bi-ideals, one-sided ideals and two-sided ideals; in particular, these substructures inherit the property of having finite $\ar$-height.  We then investigate whether these bounds can be attained. 
\end{abstract}
~\\
\textit{Keywords}: Semigroup, bi-ideal, poset of $\ar$-classes, $\ar$-height.\\
\textit{Mathematics Subject Classification 2010}: 20M10, 20M12.

\section{\large{Introduction}\nopunct}
\label{sec:intro}

Green's relations, five equivalence relations based on mutual divisibility, are arguably the most important tools for analysing the structure of semigroups.
It is natural to consider how each of Green's relations on a semigroup relates to the corresponding relation on a subsemigroup.  In general, they bear little resemblance to each other.  However, Green's relations $\ar,$ $\mathcal{L}$ and $\mathcal{H}$ on a regular subsemigroup $T$ of a semigroup $S$ are the restrictions of the corresponding relations on $S.$  If $T$ is an ideal of $S,$ then the same is also true for Green's relations $\mathcal{D}$ and $\mathcal{J}$ \cite[Lemma 2.1]{East}.  In \cite{East:2020}, East and Higgins investigated the inheritance of Green's relations by subsemigroups in the presence of stability of elements.  East also, in \cite{East:2021}, characterised Green's relations on principal one-sided ideals of an arbitrary semigroup, and then applied this theory to full transformation monoids and symmetric inverse monoids.  

This article is concerned with the poset of $\ar$-classes of a semigroup.  (Two elements of a semigroup $S$ are $\ar$-related if they generate the same principal right ideal, and the set of $\ar$-classes of $S$ is a poset under the natural partial order associated with $\ar.$)  The height of this poset is called the $\ar$-height.  The term {\em $\ar$-height} first appeared in \cite{Fleischer:2017}, in which the $\ar$-height of certain finite transformation semigroups was considered.  However, as alluded to in \cite{Fleischer:2017}, this parameter plays an implicit role in the (right) Rhodes expansion of a semigroup, a powerful tool in complexity theory, as well as in similar constructions such as the cover expansion; see \cite{Birget:1984} and \cite[Chapter XII]{Eilenberg:1976}.

The purpose of this article is to compare the $\ar$-height of a semigroup with that of its bi-ideals, one-sided ideals and two-sided ideals.
A {\em bi-ideal} of a semigroup $S$ is a subset $B$ of $S$ such that $BS^1B\subseteq B.$  This notion generalises that of one-sided (and hence two-sided) ideals.  Bi-ideals were introduced by Good and Hughes in \cite{Good:1952}, and were then studied systematically by Lajos in \cite{Lajos:1969, Lajos:1971}. 

The paper is structured as follows.  In Section \ref{sec:preliminaries}, we first present the necessary preliminary material, and then provide some basic results regarding chains of $\ar$-classes.  In Section \ref{sec:bounds}, given a semigroup $S$ with finite $\ar$-height, we establish bounds on the $\ar$-height of arbitrary bi-ideals, one-sided ideals and two-sided ideals of $S.$  We then investigate in Section \ref{sec:attainbounds} whether these bounds can be attained.  We conclude with some open questions and potential directions for future research in Section~\ref{sec:conclusion}.

\section{\large{Preliminaries}\nopunct}
\label{sec:preliminaries}

\subsection{Definitions}

Throughout this section, $S$ will denote a semigroup. 
We denote by $S^1$ the monoid obtained from $S$ by adjoining an identity if necessary (if $S$ is already a monoid, then $S^1=S$).

A subset $A\subseteq S$ is said to be a {\em right ideal} of $S$ if $AS\subseteq A.$  Left ideals are defined dually, and an {\em ideal} of $S$ is a subset that is both a right ideal and a left ideal.

A right (resp.\ left) ideal $A$ of $S$ is said to be {\em generated by} $X\subseteq A$ if $A=XS^1$ (resp.\ $A=S^1X$).  A right (resp.\ left) ideal is said to be {\em finitely generated} if it can be generated by a finite set, and {\em principal} if it can be generated by a single element.

Principal (one-sided) ideals lead to the well-known Green's relations.  For this article, we only require Green's relation $\ar$ and its associated pre-order.  Green's preorder $\leqr$ is defined by
$$a\leqr b\Leftrightarrow aS^1\subseteq bS^1,$$
and this yields the relation $\ar$:
$$a\,\ar\,b\Leftrightarrow a\leqr b\text{ and }b\leqr a.$$
It is easy to see that $\ar$ is an equivalence relation on $S$ that is compatible with left multiplication (i.e.\ it is a {\em left congruence}).

When we need to distinguish between Green's relation $\ar$ on different semigroups, we will write the semigroup as a subscript, i.e.\ $\ar_S$ for $\ar$ on $S.$  For convenience, we will write $\leq_S$ rather than $\leq_{\mathcal{R}_S},$ and $a<_S b$ if $a\leq_Sb$ but $(a, b)\notin\ar_S.$

Following standard convention, we write $R_a$ (or $R_a^S$) to denote the $\ar$-class of an element $a\in S.$

Green's pre-order $\leqr$ induces a partial order on the set of $\mathcal{R}$-classes of $S,$ given by 
$$R_a\leq R_b\Leftrightarrow a\leqr b.$$
We note that the poset $S/\ar$ is isomorphic to the poset of principal right ideals of $S$ (under $\subseteq$).
The {\em $\mathcal{R}$-height} of $S$ is the height of the poset $S/\ar$; i.e.\ the supremum of the lengths of chains of $\ar$-classes (where the {\em length} of a chain is its cardinality).  We denote the $\ar$-height of $S$ by $\HR(S).$\par
%Given a subset $X\subseteq S,$ we define the {\em bi-ideal generated by} $X$ to be the set $X\cup(XS^1X),$ which we denote by $B(X).$  It is easy to see that $B(X)$ is the smallest bi-ideal containing $X.$\par
The semigroup $S$ is said to be {\em right simple} if it has no proper right ideals, and $S$ is said to be {\em simple} if it has no proper ideals.  Certainly right simple semigroups are simple.

A right ideal $A$ of $S$ is said to be {\em minimal} if there is no right ideal of $S$ properly contained in $A.$  Minimal left ideals are defined dually.  Similarly, an ideal $A$ is called {\em minimal} if it does not contain any other ideal of $S.$  It turns out that, considered as semigroups, minimal right ideals are right simple \cite[Theorem 2.4]{Clifford:1948}, and minimal ideals are simple \cite[Theorem 1.1]{Clifford:1948}.  There is at most one minimal ideal of $S$; if it exists, we call it the {\em kernel} of $S$ and denote it by $K(S).$  On the other hand, $S$ may possess multiple minimal right ideals.  If $S$ has a minimal right ideal, then $K(S)$ is equal to the union of all the minimal right ideals \cite[Theorem 2.1]{Clifford:1948}.
A {\em completely simple} semigroup is a simple semigroup that possesses both minimal right ideals and minimal left ideals.

An element $a\in S$ is said to be {\em regular} if there exists $b\in S$ such that $a=aba.$  We denote the set of regular elements of $S$ by $\text{Reg}(S).$  The semigroup $S$ is said to be {\em regular} if $S=\text{Reg}(S).$  It turns that for every regular element $a\in S$ there exists $b\in S$ such that $a=aba$ and $b=bab$; in this case, the element $b$ is said to be an {\em inverse} of $a,$ and vice versa.  If $S$ is regular and each of its elements has a unique inverse, then $S$ is called {\em inverse}.

One of the most useful means of constructing semigroups is via a presentation.  We briefly discuss presentations here; we refer the reader to \cite{Howie:1995} for more information.

The {\em free semigroup} on a non-empty set $X,$ denoted by $X^+,$ is the set of all words over $X$ under the operation of concatenation.  A {\em presentation} is a pair $\langle X\,|\,R\rangle,$ where $X$ is a non-empty set and $R$ is a binary relation on $X^+.$  We call $R$ a set of {\em defining relations}, and we write $u=v$ for $(u, v)\in R.$  A semigroup $S$ is {\em defined by the presentation} $\langle X\,|\,R\rangle$ if it is isomorphic to $X^+/R^{\sharp},$ where $R^{\sharp}$ denotes the congruence generated by $R$ (that is, the smallest congruence on $X^+$ containing $R$).

Adjoining an empty word $\epsilon$ to $X^+$ yields the {\em free monoid} on $X,$ denoted by $X^{\ast}.$  Similarly, one can adjoin a zero to $X^+$ to obtain the {\em free semigroup with zero} on $X,$ denoted by $X^+_0.$  By replacing $X^+$ with $X^{\ast}$ or $X^+_0$ in the above definition of a presentation, one obtains the notion of a monoid presentation or a presentation of a semigroup with zero, respectively.

A presentation $\langle X\,|\,R\rangle$ can be viewed as a rewriting system, where each defining relation $u=v$ corresponds to a rewriting rule $u\to v.$
We define a binary relation $\to$ on $X^{\ast}$ by $w\to w^{\prime}$ if and only if $w=w_1uw_2, w^{\prime}=w_1vw_2$ for some $(u, v)\in R$ and $w_1, w_2\in X^{\ast}$.  We denote by $\xrightarrow{\ast}$ the reflexive and transitive closure of $\to$. 
The rewriting system $\langle X\,|\,R\rangle$ is {\em noetherian} if it is well-founded, i.e.\ if there is no infinite chain $w_1\to w_2\to\cdots$ of words from $X^{\ast}.$  The rewriting system is {\em confluent} if for any $w, w_1, w_2\in X^{\ast}$ with $w\xrightarrow{\ast}w_1$ and $w\xrightarrow{\ast}w_2$, there exists $z\in X^{\ast}$ such that $w_1\xrightarrow{\ast}z$ and $w_2\xrightarrow{\ast}z.$  If a rewriting system is both noetherian and confluent, it is said to be {\em complete}.  For a noetherian rewriting system, to determine confluence it suffices to consider the critical pairs.
A {\em critical pair} of $\langle X\,|\,R\rangle$ is a pair $(w_1, w_2)\in X^{\ast}\times X^{\ast}$ with $w_1\neq w_2$ for which there exists $w\in X^{\ast}$ such that $w\to w_1$ and $w\to w_2$.  A critical pair $(w_1, w_2)$ is said to {\em resolve} if there exists $z\in X^{\ast}$ such that $w_1\xrightarrow{\ast}z$ and $w_2\xrightarrow{\ast}z.$
A noetherian rewriting system is complete if and only if all critical pairs resolve \cite[Lemma 2.4]{Huet}.

A word in $X^{\ast}$ is called {\em irreducible} if it is does not contain a subword that forms the left-hand side of a rewriting rule.  If the rewriting system $\langle X\,|\,R\rangle$ is complete, then for any word $w\in X^{\ast}$ there is a unique irreducible word $z\in X^{\ast}$ with $w\xrightarrow{\ast}z$ \cite[Theorem 1.1.12]{Book:1993}.  In this case, the semigroup defined by $\langle X\,|\,R\rangle$ has a normal form consisting of all the irreducible words over $X.$

We refer the reader to \cite{Book:1993} for more information about rewriting systems.

\subsection{Elementary results}

We now provide a few basic results that will be useful in the next section.

\begin{lemma}
\label{max+min}
Let $S$ be a semigroup with finite $\mathcal{R}$-height.  Then $S$ has maximal and minimal $\mathcal{R}$-classes.  In particular, $S$ has a kernel.
\end{lemma}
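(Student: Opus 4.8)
The plan is to use the hypothesis of bounded chain length to rule out both infinite ascending and infinite descending chains of $\ar$-classes, and then to identify minimal $\ar$-classes with minimal right ideals so that the cited result of Clifford can be applied.

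First I would prove the existence of a maximal $\ar$-class by contradiction. If there were no maximal class, then every $\ar$-class $R_a$ would lie strictly below some class $R_b$, and so one could recursively construct an infinite strictly ascending chain $R_{a_0}<R_{a_1}<R_{a_2}<\cdots$ of $\ar$-classes. For each $n$ its initial segment of length $n+1$ is a chain of $n+1$ classes, so chains of unbounded length would exist, contradicting the finiteness of $\HR(S)$. Hence a maximal $\ar$-class exists, and the existence of a minimal $\ar$-class follows by the dual argument, using an infinite strictly descending chain in place of the ascending one.

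To deduce that $S$ has a kernel, I would show that any minimal $\ar$-class $R_a$ yields a minimal right ideal. Let $B$ be a non-empty right ideal with $B\subseteq aS^1$, and pick $b\in B$; then $bS^1\subseteq B\subseteq aS^1$, so $b\leqr a$. Since $R_a$ is minimal this forces $b\,\ar\,a$, whence $aS^1=bS^1\subseteq B$ and therefore $B=aS^1$. Thus $aS^1$ contains no proper right ideal and is a minimal right ideal, so $S$ has a minimal right ideal and hence a kernel by \cite[Theorem 2.1]{Clifford:1948}.

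Of the three assertions, the chain arguments for maximal and minimal classes are routine consequences of bounded chain length, so I expect the only delicate point to be the final step: one must verify that a minimal $\ar$-class corresponds to a minimal right ideal of $S$, and not merely to a principal right ideal that happens to be minimal among the principal right ideals. This is precisely where the minimality of the whole $\ar$-class, which forces $b\,\ar\,a$ in the computation above, is used.
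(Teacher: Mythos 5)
Your proof is correct. For the existence of maximal and minimal $\ar$-classes you argue by contradiction, building an infinite strictly ascending (resp.\ descending) chain when no maximal (resp.\ minimal) class exists; the paper instead takes a chain of maximal length, which exists by finiteness of $\HR(S)$, and observes that its top and bottom elements must be maximal and minimal classes of the whole poset. These are two sides of the same coin (bounded chain length), though your version quietly uses dependent choice where the paper's does not. The more substantive difference is in the final assertion: the paper's written proof stops at the existence of a minimal $\ar$-class and leaves the kernel entirely implicit, relying on the preliminary remarks, whereas you spell out the missing link --- that for a minimal $\ar$-class $R_a$ the principal right ideal $aS^1$ contains no proper right ideal of $S$ (any $b$ in a right ideal $B\subseteq aS^1$ satisfies $bS^1\subseteq B\subseteq aS^1$, so minimality forces $b\,\ar\,a$ and $B=aS^1$), after which Clifford's theorem gives $K(S)$. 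You are right that this is the one step requiring care: minimality must be among all right ideals, not merely among principal ones, and your computation establishes exactly that. In this respect your write-up is more complete than the paper's own proof.
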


\begin{proof}
Consider a chain of $\ar$-classes of $S$ of maximal length.  Since $S$ has finite $\ar$-height, this chain is finite and hence has both a maximal element and a minimal element.  By the maximality of the length of the chain, it follows that $S$ has a maximal $\ar$-class and a minimal $\ar$-class.
\end{proof}

\begin{lemma}
\label{R-height1}
Let $S$ be a semigroup.  Then $\HR(S)=1$ if and only if $S$ is a union of minimal right ideals.
\end{lemma}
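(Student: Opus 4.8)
The plan is to reformulate the condition $\HR(S)=1$ as a statement purely about the poset of principal right ideals, and then bridge the gap between minimality in that poset and genuine minimality among all right ideals.

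First I would observe that, since $S$ is nonempty, the poset $S/\ar$ has at least one element, so $\HR(S)\geq 1$, and $\HR(S)=1$ holds precisely when $S/\ar$ contains no chain of length $2$, i.e.\ when $S/\ar$ is an antichain. Using that a strict relation $R_a<R_b$ witnesses $R_b$ being non-minimal, this is equivalent to every $\ar$-class being a minimal element of $S/\ar$. Recalling that $S/\ar$ is isomorphic to the poset of principal right ideals, the condition becomes: for every $a\in S$, the principal right ideal $aS^1$ is minimal among principal right ideals, that is, there is no $b\in S$ with $bS^1\subsetneq aS^1$.

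The key step, which I expect to be the main obstacle, is to promote ``minimal among principal right ideals'' to ``minimal among all right ideals.'' For this I would establish the bridging claim: for $a\in S$, the right ideal $aS^1$ is a minimal right ideal if and only if $R_a$ is a minimal element of $S/\ar$. The forward direction is immediate, since any principal right ideal contained in a minimal right ideal must equal it. For the converse, given a nonempty right ideal $B\subseteq aS^1$ and any $b\in B$, one has $bS^1\subseteq B\subseteq aS^1$ (because $bS\subseteq B$ as $B$ is a right ideal), whence $R_b\leq R_a$; minimality of $R_a$ then gives $bS^1=aS^1$, forcing $B=aS^1$. The subtlety is exactly that the poset $S/\ar$ records only principal right ideals, so one must check that any right ideal lying below $aS^1$ is ``detected'' by the principal right ideal generated by one of its elements, which is what the containment $bS^1\subseteq B$ supplies.

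Finally I would assemble the equivalence. The chain of reformulations yields that $\HR(S)=1$ if and only if $aS^1$ is a minimal right ideal for every $a\in S$. If this holds, then each $a$ lies in the minimal right ideal $aS^1$, so $S=\bigcup_{a\in S}aS^1$ is a union of minimal right ideals. Conversely, if $S$ is a union of minimal right ideals, then any $a\in S$ lies in some minimal right ideal $A$; since $aS^1$ is a right ideal contained in $A$, minimality of $A$ forces $aS^1=A$, so every $aS^1$ is minimal. This closes the loop and completes the proof.
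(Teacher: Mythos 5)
Your proposal is correct and follows exactly the same route as the paper, which simply asserts that $\HR(S)=1$ holds if and only if every $\ar$-class is minimal, which in turn is equivalent to $S$ being a union of minimal right ideals. Your write-up just supplies the details the paper leaves as ``clear''---in particular the bridging claim that $aS^1$ is a minimal right ideal precisely when $R_a$ is minimal in $S/\ar$, which is the right way to make the second equivalence rigorous.
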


\begin{proof}
Clearly $\HR(S)=1$ if and only if every $\ar$-class of $S$ is minimal, which is equivalent to $S$ being a union of minimal right ideals.
\end{proof}

%\begin{prop}
%Let $S$ be a semigroup.  Then $\HR(S)=2$ if and only if $S$ has minimal right ideals and the Rees quotient $S/K(S)$ is a union of $0$-minimal right ideals.
%\end{prop}

\begin{lemma}
\label{bi-ideal chain}
Let $S$ be a semigroup with finite $\ar$-height, and let $B$ be a bi-ideal of $S.$  If $\HR(B)\geq n,$ then there exists
a chain
$$b_1<_B b_2<_B\dots<_B b_n$$
where $b_1\in K(S).$
\end{lemma}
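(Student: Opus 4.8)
The plan is to realise the hypothesis by a concrete chain and then replace only its bottom element by one lying in $K(S),$ without changing the length. Since $\HR(B)\geq n,$ I would first fix representatives $c_1,\dots,c_n\in B$ of a chain of $\ar_B$-classes, so that
$$c_1<_Bc_2<_B\dots<_Bc_n.$$
As $S$ has finite $\ar$-height, Lemma \ref{max+min} ensures that $K(S)$ exists, and the goal reduces to finding $b_1\in K(S)\cap B$ with $b_1\leq_Bc_1$: substituting this $b_1$ for $c_1$ should produce the required chain.

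I expect the main obstacle to be that a bi-ideal is not closed under one-sided multiplication by arbitrary elements of $S,$ so one cannot simply multiply $c_1$ on the right by a kernel element and remain in $B.$ The way around this is the observation that $B\cap K(S)\neq\emptyset.$ Indeed, taking any $b\in B$ and any $k\in K(S),$ the bi-ideal condition gives $bkb\in BS^1B\subseteq B,$ while $K(S)$ being an ideal gives $bkb\in K(S);$ hence $bkb\in B\cap K(S).$ Establishing this is the heart of the argument.

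With a fixed $\beta\in B\cap K(S)$ in hand, I would set $b_1:=c_1\beta.$ Since $B$ is a subsemigroup (as $BB\subseteq BS^1B\subseteq B$) we have $b_1\in B,$ and since $K(S)$ is a left ideal and $\beta\in K(S)$ we have $b_1=c_1\beta\in SK(S)\subseteq K(S);$ thus $b_1\in B\cap K(S).$ Moreover $b_1=c_1\beta\in c_1B^1,$ so $b_1\leq_Bc_1.$

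It then remains to check that strictness survives the substitution. From $b_1\leq_Bc_1<_Bc_2$ I get $b_1\leq_Bc_2,$ and were $b_1\,\ar_B\,c_2$ to hold then $c_2\leq_Bb_1\leq_Bc_1$ would force $c_1\,\ar_B\,c_2,$ contradicting $c_1<_Bc_2;$ hence $b_1<_Bc_2.$ This yields the chain
$$b_1<_Bc_2<_B\dots<_Bc_n$$
of length $n$ with $b_1\in K(S),$ completing the proof. (For $n=1$ there is nothing to verify beyond $b_1\in K(S)\cap B.$)
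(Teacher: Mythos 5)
Your proof is correct and follows essentially the same route as the paper: both hinge on observing that $B\cap K(S)\neq\emptyset$ via the bi-ideal property and then multiplying the bottom element of the chain on the right by some $\beta\in B\cap K(S)$ to obtain the new bottom element. The only cosmetic differences are that you replace $c_1$ by $b_1=c_1\beta$ and verify strictness against $c_2$, which uniformly avoids any case split, whereas the paper treats the case $a_1\in K(S)$ separately and otherwise inserts $b_1=a_1u$ strictly below $a_1$ (using $a_1\notin K(S)$ for strictness) while discarding the top element $a_n$.
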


\begin{proof}
Since $\HR(B)\geq n,$ there exists a chain
$$a_1<_B a_2<_B\dots<_B a_n.$$
Since $S$ has finite $\ar$-height, the kernel $K=K(S)$ exists by Lemma \ref{max+min}.
If $a_1\in K,$ we just set $b_i=a_i$ for all $i\in\{1, \dots, n\}.$  Suppose then that $a_1\notin K.$  Since $BKB\subseteq B\cap K,$ the intersection $B\cap K$ is non-empty.  Choose $u\in B\cap K$ and let $b_1=a_1u.$  Since $b_1\in B^2$ and $B$ is a subsemigroup of $S,$ we have that $b_1\in B.$  Also, we have that $b_1\in a_1K\subseteq K,$ since $K$ is an ideal of $S.$  Thus $b_1\in B\cap K.$  Clearly $b_1\leq_B a_1$.  In fact, we have that $b_1<_B a_1,$ for otherwise we would have $a_1\in b_1B\in K.$
Thus, setting $b_{i+1}=a_i$ for all $i\in\{1, \dots, n-1\}$ yields the desired chain.
\end{proof}

An element $a\in S$ is said to have a {\em local right identity} (in $S$) if $a\in aS$ (that is, $a=ab$ for some $b\in S$).  If $S$ is a monoid, a regular semigroup or a right simple semigroup, then every element has a local right identity.

\begin{lemma}
\label{lem:lri}
Let $S$ be a semigroup and let $B$ be a bi-ideal of $S.$  If $b, c\in B$ have local right identites in $B,$ then $b\leq_Bc$ if and only if $b\leq_Sc,$ and $b<_Bc$ if and only if $b<_Sc.$
\end{lemma}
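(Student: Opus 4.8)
The plan is to establish the non-strict equivalence $b \leq_B c \iff b \leq_S c$ first, and then obtain the strict version as a formal consequence. One direction of the non-strict equivalence costs nothing and uses neither local right identity: if $b \leq_B c$, then $b \in cB^1 \subseteq cS^1$ since $B^1 \subseteq S^1$, so $b \leq_S c$.

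The substantive direction is $b \leq_S c \Rightarrow b \leq_B c$, and this is where I would bring in the hypotheses. Assume $b \leq_S c$, so $b \in cS^1$; if $b = c$ there is nothing to do, so suppose $b = cs$ with $s \in S$. Picking $e \in B$ with $b = be$ and $f \in B$ with $c = cf$, I would compute
$$b = be = cse = cfse = c(fse),$$
where the third equality inserts $f$ using $c = cf$. The bi-ideal axiom $BS^1B \subseteq B$ then forces $fse \in B$ (as $f, e \in B$ and $s \in S^1$), so $b \in cB \subseteq cB^1$ and thus $b \leq_B c$. I expect this line to be the crux: the trick is to produce a factorisation of $b$ through $c$ whose cofactor the bi-ideal condition pushes back into $B$, and it is worth stressing that a single local right identity does not suffice. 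The identity $f$ of $c$ lets the leading $c$ be stripped off, while the identity $e$ of $b$ absorbs the otherwise-stray factor on the right, so that the remainder $fse$ lies in $BS^1B$.

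Finally, I would deduce the strict statement with no further computation. Since both $b$ and $c$ have local right identities in $B$, the non-strict equivalence also applies with their roles reversed, giving $c \leq_B b \iff c \leq_S b$. Recalling that $b <_B c$ means $b \leq_B c$ together with $c \not\leq_B b$, the two equivalences combine to give $b <_B c \iff b <_S c$.
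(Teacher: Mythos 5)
Your proposal is correct and follows essentially the same route as the paper's proof: the trivial direction, then the insertion of both local right identities to rewrite $b = c(fse)$ with $fse \in BS^1B \subseteq B$ (the paper writes $b = bu = csu = c(vsu)$, identically), and finally the strict version deduced formally from the two-sided application of the non-strict equivalence. The only cosmetic difference is that you split off the case $b = c$, whereas the paper keeps $s \in S^1$ throughout, which the bi-ideal condition $BS^1B \subseteq B$ handles uniformly.
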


\begin{proof}
If $b\leq_Bc$ then clearly $b\leq_Sc.$  Suppose that $b\leq_S c.$  Then $b=cs$ for some $s\in S^1.$  Now, by assumption, there exist $u, v\in B$ such that $b=bu$ and $c=cv.$  Then we have
$$b=bu=csu=c(vsu)\in cB,$$ 
using the fact that $B$ is a bi-ideal of $S.$  Thus $b\leq_Bc.$\par
Now, using the first part of the lemma, we have
$$b<_Bc\;\Leftrightarrow\;[b\leq_Bc\text{ and }c\not\leq_Bb]\;\Leftrightarrow\;[b\leq_Sc\text{ and }c\not\leq_Sb]\;\Leftrightarrow\;b<_Sc,$$
as required.
\end{proof}

\begin{cor}
\label{cor:bi-ideal,CS}
Let $S$ be a semigroup with a completely simple kernel $K=K(S),$ and let $B$ be a bi-ideal of $S.$  For any $b, c\in B\cap K,$ we have $b\leq_Bc$ if and only if $b\leq_Sc,$ and $b<_Bc$ if and only if $b<_Sc.$
\end{cor}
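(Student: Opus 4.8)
The plan is to deduce Corollary \ref{cor:bi-ideal,CS} from Lemma \ref{lem:lri} by showing that the hypothesis of that lemma is automatically satisfied for elements of $B\cap K$ when $K$ is completely simple. Recall that Lemma \ref{lem:lri} asserts the desired equivalences precisely for elements possessing local right identities in $B$. So the entire task reduces to the following claim: every $b\in B\cap K$ has a local right identity in $B$, i.e.\ $b\in bB$.

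To prove this claim, I would exploit the structure of a completely simple semigroup. Since $K$ is completely simple, it possesses minimal right ideals, and in particular every element of $K$ lies in some minimal right ideal of $K$; moreover minimal right ideals are right simple as subsemigroups (cited earlier from \cite[Theorem 2.4]{Clifford:1948}). The key consequence I want is that every element of $K$ has a local right identity \emph{in $K$}: indeed, completely simple semigroups are regular (they are unions of groups), so for $b\in K$ there is $b'\in K$ with $b=bb'b$, whence $b=b(b'b)\in bK$. Thus there exists $x\in K$ with $b=bx$. This gives a local right identity in $K$, but I need one in $B$.

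The bridge from $K$ to $B$ is the bi-ideal property $BS^1B\subseteq B$. Starting from $b=bx$ with $b\in B\cap K$ and $x\in K$, I would iterate: $b=bx=bxx=\dots$, so $b=bx^2$, and I want to land inside $B$. The cleanest route is to observe that $b=bx$ implies $b=bxbx'\cdots$ type manipulations; more directly, since $b=bx$ and $x\in K$, consider $b=b(xb)x'$ for a suitable element. Concretely, pick $b'\in K$ with $b=bb'b$. Then $u:=b'b\in K$ and $b=bu$, and furthermore $u=b'b$ where I can also write the local right identity so that $u=u u$ is idempotent if I choose $b'$ to be a group inverse; in any case the decisive point is to produce an element of $B$ that acts as a right identity on $b$. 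Since $b\in B$ and $u\in S^1$ and $b\in B$, the product $bub=b(b)=b^2\in B$ is not quite what I want; instead I use $b = b\cdot(b'b) = (bb')\cdot b$ and note $bb' \in K$, then form $b = b\cdot b' \cdot b$ with the outer factors $b\in B$ giving $b = b\,(b')\,b \in B S^1 B\subseteq B$ automatically, which is consistent but does not yet yield a right identity in $B$.

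I expect the main obstacle to be exactly this last step: converting a local right identity $x\in K$ into one lying in $B\cap K$. The fix is to set $e = b'b$ where $b'$ is chosen so that $b'b$ is an idempotent (possible since $K$ is completely simple, hence a union of groups, so $b$ lies in a group $\mathcal{H}$-class with identity $e$ satisfying $be=b$ and $e=e^2$), and then write $e = (b'b)$ and observe $beb = b\cdot e \cdot b\in BS^1B\subseteq B$, but crucially take $w := b\cdot b' \cdot b$; a better choice is to note $b = be = b\,e\,e = (b e)(b' b)$ and factor so that an element of $B$ appears. The robust argument is: $b\in B$, and $be^{}=b$, so $b = b e = b (b' b) = (b b')(b b' b) \cdots$; I localize by taking $u := e b e$? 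To avoid this thrashing in the final write-up I would instead argue directly that $b=bu$ with $u=b'b\in K$, and then $bu\in B$ forces, via $b=b\cdot u = b\cdot u\cdot u=\dots$ together with $bub\in B$, the membership $b\in bB$ by choosing the witness $ub\in B$ (since $u\in S^1,b\in B$ gives $ub\in S^1 B$, and then $b\cdot(ub)=bub\in BS^1B\subseteq B$ with $bub=b$ when $u$ is the idempotent right identity), so $ub\in B$ and $b=b(ub)\in bB$. Verifying $bub=b$ from $bu=b$ is immediate, and this completes the claim; the rest is Lemma \ref{lem:lri}.
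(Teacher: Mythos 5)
Your overall strategy is exactly the paper's: reduce to Lemma \ref{lem:lri} by showing that every $b\in B\cap K$ has a local right identity in $B$. That reduction is fine, and so is your observation that $K$, being completely simple, is regular, so $b=bb'b$ gives a local right identity $u=b'b$ \emph{in $K$}. But the step you yourself flag as ``the main obstacle'' --- converting this into a local right identity lying in $B$ --- is where your argument breaks, in two places. First, $ub\in B$ does not follow from $ub\in S^1B$: a bi-ideal satisfies $BS^1B\subseteq B$ but in general absorbs neither left nor right multiplication by $S$, so $S^1B\not\subseteq B$ (this failure is the whole reason the corollary needs a proof). What your parenthetical actually establishes is that the \emph{product} $b(ub)=bub$ lies in $BS^1B\subseteq B$, not that the proposed witness $ub$ does. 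Second, and fatally, the identity $bub=b$ is not ``immediate from $bu=b$''; it is false. From $bu=b$ you get $bub=(bu)b=b^2$, and $b^2\neq b$ unless $b$ is idempotent. (Indeed, in your own intermediate discussion you correctly computed $bub=b^2$, and then contradicted this at the end.) With $u=e$ the idempotent, the witness $ub$ is just $b$ itself, and the required equation becomes $b=b^2$.

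The missing idea is that the witness must \emph{begin and end} with an element of $B$, so that the bi-ideal property applies to the witness itself; achieving this requires duplicating $b$ at the front. Since $b\in K$ and $K$ is completely simple, $b$ lies in a group $\h$-class, so $b\,\ar_S\,b^2$, giving $b=b^2s$ for some $s\in S^1$. Combining this with regularity $b=bxb$ (with $x\in K$) yields
$$b=bxb=(b^2s)xb=b\,(b\,sx\,b)\in b\,(BS^1B)\subseteq bB,$$
and now the witness $bsxb$ genuinely lies in $B$. (In your group-theoretic language: take $s=x=b^{-1}$, the inverse of $b$ in its $\h$-class, so the witness is $bb^{-2}b$, and one checks $b\cdot bb^{-2}b=b$.) This is precisely how the paper closes the gap; the rest of your proof, invoking Lemma \ref{lem:lri}, is as in the paper.
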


\begin{proof}
We show that every element of $B\cap K$ has a local right identity in $B,$ and the result then follows from Lemma \ref{lem:lri}.  
Consider $b\in B\cap K.$  Since $b\in K,$ we have that $b\,\ar_S\,b^2,$ so $b=b^2s$ for some $s\in S^1.$  Since $K$ is regular, there exists $x\in K$ such that $b=bxb.$  Thus $$b=b^2sxb=b(bsxb)\in bB,$$ so $b$ has a local right identity in $B.$
\end{proof}

\section{\large{Bounds on the $\mathcal{R}$-height of Bi-ideals}\nopunct}
\label{sec:bounds}

In general, the property of having finite $\ar$-height is not inherited by subsemigroups.  For example, the group of integers $\Z$ has $\ar$-height 1 but its subsemigroup $\N$ has infinite $\ar$-height.  Perhaps surprisingly, however, bi-ideals {\em do} inherit the property of having finite $\ar$-height.  In fact, given a semigroup $S$ with finite $\ar$-height, the following result establishes a bound on the $\ar$-height of an arbitrary bi-ideal of $S.$

\begin{thm}
\label{thm:bi-ideal,bound}
Let $S$ be a semigroup with finite $\mathcal{R}$-height, and let $B$ be a bi-ideal of $S.$  Then $$\HR(B)\leq 3n-1,$$ where $n$ is the maximum length of a chain of $\ar$-classes of $S$ that intersect $B.$
\end{thm}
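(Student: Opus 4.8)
The plan is to bound the length of an arbitrary chain $b_1 <_B b_2 <_B \dots <_B b_m$ in $B$ by $3n-1$. Since $b\leq_B c$ implies $b\leq_S c$, projecting the chain into $S$ gives $b_1\leq_S b_2\leq_S\dots\leq_S b_m$. I would group the $b_i$ into maximal consecutive runs lying in a common $\ar_S$-class (call these \emph{blocks}); consecutive blocks then sit in strictly $<_S$-related $\ar_S$-classes, so the blocks determine a chain of $\ar_S$-classes, each meeting $B$, whence there are at most $n$ of them.

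The crux is to show each block has length at most $3$. The key claim I would isolate is this: if $c^-<_B c<_B c^+$ with $c^-\,\ar_S\,c\,\ar_S\,c^+$, then $c$ has a local right identity in $B$. To prove it, write $c^-=ct$ and $c=c^+t'$ with $t,t'\in B$ (from the two $<_B$-relations), and $c=c^-u$ and $c^+=cu'$ with $u,u'\in S$ (from the $\ar_S$-relations). Setting $\beta=tuu't'$, one checks $c\beta=c$ by cancelling along the chain, while $\beta=t(uu')t'\in BS^1B\subseteq B$ by the bi-ideal property; hence $c\in cB$. Combined with Lemma~\ref{lem:lri}, any two such ``interior'' elements of a single block are $\ar_B$-equivalent, so a block contains at most one element other than its top and bottom, giving length at most $3$. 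This already yields $\HR(B)\leq 3n$.

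To sharpen the bound to $3n-1$, I would argue that the bottom block has length at most $2$. Using Lemma~\ref{bi-ideal chain}, I may take the chain so that $b_1\in K(S)$; since $S$ has finite $\ar$-height, $b_1$ lies in a minimal right ideal $R_0$, which forms a single $\ar_S$-class. Suppose the bottom block had length $\geq 3$. If $b_1$ has a local right identity in $B$, then $b_1$ and the interior element $b_2$ are both local-right-identity elements of $R_0$ and hence $\ar_B$-equivalent by Lemma~\ref{lem:lri}, contradicting $b_1<_B b_2$. If $b_1$ has no local right identity in $B$, then for any $v\in B$ the element $b_1v$ lies in $B\cap R_0$ and satisfies $b_1v<_B b_1$ (strictness being exactly the failure of a local right identity); this exhibits $b_1$ as an interior element $b_1v<_B b_1<_B b_2$ of a run in $R_0$, so the key claim forces $b_1\in b_1B$, again a contradiction. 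Hence the bottom block has length at most $2$, and with at most $n-1$ further blocks of length at most $3$ we obtain $m\leq 2+3(n-1)=3n-1$; as this bounds every chain, $\HR(B)\leq 3n-1$.

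The main obstacle is the key claim: finding the right sandwich $\beta=tuu't'\in BS^1B$ and verifying $c\beta=c$ is the single idea the whole argument turns on, and it is exactly where the bi-ideal hypothesis (rather than mere closure under multiplication) is used. The remaining difficulty is purely bookkeeping at the boundary: locating the ``$-1$'' in the bottom block and justifying, via the minimal right ideal containing $b_1$, that the manufactured element $b_1v$ stays inside the same $\ar_S$-class.
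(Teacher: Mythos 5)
Your proof is correct, and while it shares the paper's combinatorial skeleton---anchor the chain in the kernel via Lemma~\ref{bi-ideal chain}, show that at most three consecutive chain elements can share an $\ar_S$-class and at most two can lie in the kernel class, then count---the mechanisms you use for the two key facts are genuinely different. The paper argues by contradiction with the generalised pigeonhole principle: assuming $\HR(B)\geq 3n$, it first shows $b_1<_Sb_3$ (using $b_1\,\ar_S\,b_1^2$, which holds because the $\ar_S$-classes in $K(S)$ are minimal, together with $b_2\in b_3B\subseteq b_1^2S^1B\subseteq b_1B$), then finds $b_i\,\ar_S\,b_{i+3}$ and derives $b_{i+2}\in b_{i+3}B\subseteq b_iSB\subseteq b_{i+1}BSB\subseteq b_{i+1}B$, contradicting $b_{i+1}<_Bb_{i+2}$. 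You instead isolate the claim that an interior element of a block has a local right identity in $B$---your $\beta=tuu't'$ computation is the same $BS^1B$-sandwich algebra as the paper's containments, repackaged---and then let Lemma~\ref{lem:lri} do the collapsing: two interior elements of one block would be $\ar_B$-related, so blocks have length at most $3$. Your bottom-block treatment (case analysis on whether $b_1$ has a local right identity, manufacturing $b_1v<_Bb_1$ inside the minimal right ideal $b_1S^1$ in the bad case) replaces the paper's $b_1^2$ trick, though both rest on the same fact about minimality in the kernel. What your version buys: it is a direct count rather than a contradiction-plus-pigeonhole argument, and it exposes structural information (interior chain elements have local right identities) that links this theorem to Lemma~\ref{lem:lri} and Proposition~\ref{prop:lri}; what the paper's version buys is brevity---one containment per key step, no case analysis. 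One point to phrase carefully: an arbitrary chain need not start in $K(S)$, so ``I may take the chain so that $b_1\in K(S)$'' should be run in the paper's logical order---if $\HR(B)\geq 3n$, Lemma~\ref{bi-ideal chain} produces a kernel-anchored chain of length $3n$, and your block count then bounds its length by $3n-1$, a contradiction.
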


\begin{proof}
Suppose for a contradiction that $\HR(B)\geq 3n.$  Then, by Lemma \ref{bi-ideal chain}, there exists a chain 
$$b_1<_B b_2<_B\cdots<_B b_{3n}$$
where $b_1\in K(S).$  For each $i\in\{1, \dots, 3n-1\}$ we have $b_i\in b_{i+1}B\subseteq b_{i+1}S,$ so $b_i\leq_S b_{i+1}.$  Thus we have a chain
$$b_1\leq_S b_2\leq_S\cdots\leq_S b_{3n}.$$
Since each $b_i$ belongs to $B,$ by assumption the chain 
$$R_{b_1}^S\leq_S R_{b_2}^S\leq_S\cdots\leq_S R_{b_{3n}}^S$$
of $\ar_S$-classes has size at most $n.$

We claim that $b_1<_S b_3.$  Indeed, suppose that $b_1\,\ar_S\,b_3.$  Since the $\ar_S$-classes in $K$ are minimal and $b_1^2\leq_S b_1,$ it follows that $b_1\,\ar_S\,b_1^2.$  Thus $b_3\,\ar_S\,b_1^2.$  We then have
$$b_2\in b_3B\subseteq(b_1^2S^1)B\subseteq b_1(BS^1B)\subseteq b_1B,$$
where for the final containment we use the fact that $B$ is a bi-ideal of $S.$   But then $b_2\leq_Bb_1,$ contradicting the fact that $b_1<_B b_2$, so we have established the claim.

It follows from the above claim that $n>1.$  Since $b_1<_S b_3,$ the chain 
$$R_{b_2}^S\leq_S R_{b_2}^S\leq_S\cdots\leq_S R_{b_{3n}}^S$$
has size at most $n-1.$
Since $\frac{3n-2}{n-1}=3+\frac{1}{n-1}>3,$ by the generalised pigeonhole principle there exist $i, j, k, l\in\{3, \dots, 3n\}$ with $i<j<k<l$ such that $b_i\,\ar_S\,b_j\,\ar_S\,b_k\,\ar_S\,b_l.$  Since $b_i\leq_S b_{i+1}\leq_S\dots\leq_S b_l,$ we deduce that $b_i\,\ar_S\,b_m$ for all $m\in\{i+1, \dots, l\}.$  In particular, we have $b_i\,\ar_S\,b_{i+3},$ and hence $b_{i+3}\in b_iS.$  Therefore, we have that
$$b_{i+2}\in b_{i+3}B\subseteq b_iSB\subseteq b_{i+1}BSB\subseteq b_{i+1}B,$$
using the fact that $B$ is a bi-ideal of $S.$  But this contradicts the fact that $b_{i+1}<_Bb_{i+2}$.  This completes the proof.
\end{proof}

\begin{cor}
\label{cor:bi-ideal,bound}
Let $S$ be a semigroup with finite $\mathcal{R}$-height, and let $B$ be a bi-ideal of $S.$  Then $\HR(B)\leq 3\HR(S)-1.$
\end{cor}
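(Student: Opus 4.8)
The plan is to read this off directly from Theorem \ref{thm:bi-ideal,bound}, so the work is entirely bookkeeping rather than new argument. Recall that the theorem gives $\HR(B)\leq 3n-1,$ where $n$ is the maximum length of a chain of $\ar$-classes of $S$ that intersect $B.$ The single observation I would make is that every such chain is, in particular, a chain of $\ar$-classes of $S$ (the qualifier ``that intersect $B$'' only restricts which $\ar$-classes of $S$ we are allowed to use, it does not change the ambient poset $S/\ar$ in which the chain lives). Hence the length of any chain of $\ar$-classes of $S$ meeting $B$ is bounded above by the supremum of lengths of chains in $S/\ar,$ which is exactly $\HR(S).$ This yields $n\leq \HR(S).$

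From here the conclusion is immediate: combining the two inequalities gives
$$\HR(B)\leq 3n-1\leq 3\HR(S)-1,$$
as required. I would also note in passing that since $S$ has finite $\ar$-height the quantity $n$ is finite (indeed $n\leq\HR(S)<\infty$), so there is no issue with the bound being vacuous or with $n$ failing to be well defined.

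I do not anticipate any genuine obstacle here, as the corollary is a straightforward specialisation of the theorem. The only point requiring a moment's care is making explicit that $n\leq\HR(S)$ rather than treating it as self-evident, since $n$ counts chains subject to the intersection constraint while $\HR(S)$ counts all chains; the inequality holds because imposing a constraint on admissible chains can only decrease (never increase) the supremum of their lengths.
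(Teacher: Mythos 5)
Your proposal is correct and matches the paper's (implicit) reasoning exactly: the paper states this corollary without proof precisely because it follows from Theorem \ref{thm:bi-ideal,bound} via the observation that $n\leq\HR(S)$. Making that inequality explicit, as you do, is the whole content of the deduction.
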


In the case that the kernel $K(S)$ is completely simple, we obtain a slightly shorter bound for the $\ar$-height of a bi-ideal than that given in Theorem \ref{thm:bi-ideal,bound}.

\begin{thm}
\label{thm:bi-ideal,CS,bound}
Let $S$ be a semigroup with finite $\mathcal{R}$-height whose kernel is completely simple, and let $B$ be a bi-ideal of $S.$  Then $$\HR(B)\leq 3n-2,$$ where $n$ is the maximum length of a chain of $\ar$-classes of $S$ that intersect $B.$
\end{thm}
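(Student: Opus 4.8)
The plan is to follow the structure of the proof of Theorem \ref{thm:bi-ideal,bound}, but to sharpen the initial claim from $b_1<_Sb_3$ to $b_1<_Sb_2$; this extra unit of strictness is exactly what turns the bound $3n-1$ into $3n-2$. Accordingly, I would suppose for a contradiction that $\HR(B)\geq 3n-1$ and invoke Lemma \ref{bi-ideal chain} to obtain a chain
$$b_1<_Bb_2<_B\cdots<_Bb_{3n-1}$$
with $b_1\in K=K(S)$. As before, each relation $b_i<_Bb_{i+1}$ gives $b_i\leq_Sb_{i+1}$, so the $b_i$ determine a chain $R_{b_1}^S\leq_S\cdots\leq_SR_{b_{3n-1}}^S$ of $\ar_S$-classes all meeting $B$, whence at most $n$ of them are distinct.

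The crux is the claim that $b_1<_Sb_2$, and this is where I would use the complete simplicity of $K$ through Corollary \ref{cor:bi-ideal,CS}. Suppose instead that $b_1\,\ar_S\,b_2$. Since $K$ is an ideal and $b_1\in K$, we have $b_2\in b_1S^1\subseteq K$, so $b_2\in B\cap K$; as also $b_1\in B\cap K$, Corollary \ref{cor:bi-ideal,CS} applies to the pair $b_1,b_2$ and converts $b_1<_Bb_2$ into $b_1<_Sb_2$, contradicting $b_1\,\ar_S\,b_2$. I expect this step to be the main (indeed the only genuinely new) obstacle: it is where the hypothesis on $K$ is consumed, and it rests on the observation that $K$, being an ideal, is a union of $\ar_S$-classes, which forces $b_2$ into $K$ and thereby makes the corollary available.

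With $b_1<_Sb_2$ in hand, the remainder is a routine adaptation of the earlier argument. Strictness gives $n>1$, and since $R_{b_1}^S$ now lies strictly below every $R_{b_j}^S$ with $j\geq 2$, the classes $R_{b_2}^S,\dots,R_{b_{3n-1}}^S$ number at most $n-1$. These are $3n-2$ elements distributed among at most $n-1$ classes, and since $\frac{3n-2}{n-1}=3+\frac{1}{n-1}>3$, the generalised pigeonhole principle yields indices $i<j<k<l$ in $\{2,\dots,3n-1\}$ with $b_i\,\ar_S\,b_j\,\ar_S\,b_k\,\ar_S\,b_l$, and hence $b_i\,\ar_S\,b_{i+3}$ by monotonicity of the chain. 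The bi-ideal computation from the end of the proof of Theorem \ref{thm:bi-ideal,bound} then places $b_{i+2}\in b_{i+1}B$, contradicting $b_{i+1}<_Bb_{i+2}$ and completing the argument.
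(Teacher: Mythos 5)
Your proposal is correct and follows essentially the same route as the paper's own proof: contradiction via Lemma \ref{bi-ideal chain}, then Corollary \ref{cor:bi-ideal,CS} to force $b_1<_Sb_2$, then the pigeonhole and bi-ideal computation from Theorem \ref{thm:bi-ideal,bound}. The only difference is that you spell out why $b_2\in B\cap K$ (via $b_2\in b_1S^1\subseteq K$), a step the paper leaves implicit when invoking the corollary --- a welcome clarification, not a different argument.
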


\begin{proof}
Suppose for a contradiction that $\HR(B)\geq 3n-1.$  Then, by Lemma \ref{bi-ideal chain}, there exists a chain
$$b_1<_B b_2<_B\cdots<_B b_{3n-1}$$
where $b_1\in K(S).$  Then we have a chain
$$b_1\leq_S b_2\leq_S\cdots\leq_S b_{3n-1}.$$
By assumption, the chain 
$$R_{b_1}^S\leq_S R_{b_2}^S\leq_S\cdots\leq_S R_{b_{3n-1}}^S$$
has size at most $n.$  We cannot have $b_1\,\ar_S\,b_2,$ since that would imply that $b_1\,\ar_B\,b_2$ by Corollary \ref{cor:bi-ideal,CS}, so $b_1<_Sb_2.$  Therefore, the chain 
$$R_{b_2}^S\leq_S R_{b_3}^S\leq_S\cdots\leq_S R_{b_{3n-1}}^S$$
has size at most $n-1.$  Since $\frac{3n-2}{n-1}>3,$ by the generalised pigeonhole principle we obtain $b_i\,\ar_S\,b_{i+3}$ for some $i\in\{2, \dots, 3n-4\}.$  But then the same argument as that of Theorem \ref{thm:bi-ideal,bound} yields a contradiction.
\end{proof}

\begin{cor}
\label{cor:bi-ideal,CS,bound}
Let $S$ be a semigroup with finite $\mathcal{R}$-height whose kernel is completely simple, and let $B$ be a bi-ideal of $S.$  Then $\HR(B)\leq 3\HR(S)-2.$
\end{cor}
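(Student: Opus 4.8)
The plan is to deduce this immediately from Theorem \ref{thm:bi-ideal,CS,bound}, which already establishes the sharper-looking bound $\HR(B)\leq 3n-2$, where $n$ is the maximum length of a chain of $\ar$-classes of $S$ that intersect $B$. All that remains is to relate the parameter $n$ to the global invariant $\HR(S)$.

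First I would observe that any chain of $\ar$-classes of $S$ that intersect $B$ is, in particular, a chain of $\ar$-classes of $S$. Hence the chains counted by $n$ form a subfamily of all chains in the poset $S/\ar$, and so the supremum of their lengths cannot exceed the supremum of the lengths of all chains of $\ar$-classes of $S$. Since the latter supremum is by definition $\HR(S)$, this yields $n\leq\HR(S)$. Because $S$ has finite $\ar$-height, both quantities are finite and this inequality is a genuine comparison of natural numbers.

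Substituting into the conclusion of Theorem \ref{thm:bi-ideal,CS,bound}, and using that the map $t\mapsto 3t-2$ is monotonic, gives
$$\HR(B)\leq 3n-2\leq 3\HR(S)-2,$$
as required. There is essentially no obstacle to overcome here: the entire content of the result resides in Theorem \ref{thm:bi-ideal,CS,bound}, and the corollary is a one-line consequence once one notes the harmless fact that $n\leq\HR(S)$.
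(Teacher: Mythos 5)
Your proposal is correct and matches the paper's (implicit) reasoning exactly: the paper states this corollary without proof precisely because it follows from Theorem \ref{thm:bi-ideal,CS,bound} via the observation that $n\leq\HR(S)$, which is the entire content of your argument.
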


If $B$ is a bi-ideal in which every element has a local right identity, then it follows from Lemma \ref{lem:lri} that there exists a chain of $\ar_B$-classes of length $i$ if and only if there exists a chain of $\ar_S$ classes that intersect $B$ of length $i.$  Thus we deduce:

\begin{prop}
\label{prop:lri}
Let $S$ be a semigroup with finite $\mathcal{R}$-height, and let $B$ be a bi-ideal of $S$ in which every element has a local right identity in $B.$  Then $$\HR(B)=n,$$ where $n$ is the maximal length of a chain of $\ar$-classes of $S$ that intersect $B.$ 
\end{prop}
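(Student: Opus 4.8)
The plan is to establish the equality $\HR(B)=n$ by proving the two inequalities $\HR(B)\leq n$ and $\HR(B)\geq n$ separately, transferring chains between $B$ and $S$ in each direction via Lemma \ref{lem:lri}. The crucial observation, already indicated in the remark preceding the statement, is that since every element of $B$ has a local right identity in $B$, the relation $<_B$ coincides with the restriction of $<_S$ to $B$; this is exactly the second assertion of Lemma \ref{lem:lri} applied to pairs of elements of $B$. Note also that $n$ is finite, being bounded above by $\HR(S)$, so the relevant suprema are attained.

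For the upper bound $\HR(B)\leq n$, I would take an arbitrary finite chain of $\ar_B$-classes, represented by elements $b_1<_B b_2<_B\cdots<_B b_m$ in $B$. Since each $b_i$ has a local right identity in $B$, Lemma \ref{lem:lri} gives $b_i<_S b_{i+1}$ for each $i$, so the same elements yield a chain $b_1<_S b_2<_S\cdots<_S b_m$ in $S$. As each $b_i\in B$, the $\ar_S$-classes $R_{b_1}^S,\dots,R_{b_m}^S$ all intersect $B$, and they are pairwise distinct because the chain is strict under $<_S$. Hence $m\leq n$, and taking the supremum over all such chains gives $\HR(B)\leq n$.

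For the lower bound $\HR(B)\geq n$, I would begin with a chain of $\ar_S$-classes intersecting $B$ of maximal length $n$, say $R_{a_1}^S<_S R_{a_2}^S<_S\cdots<_S R_{a_n}^S$ with each class meeting $B$. Choosing a representative $c_i\in R_{a_i}^S\cap B$ for each $i$ produces elements of $B$ satisfying $c_i<_S c_{i+1}$, the strictness being preserved because the $c_i$ lie in distinct $\ar_S$-classes. Each $c_i$ has a local right identity in $B$ by hypothesis, so Lemma \ref{lem:lri} converts this into a chain $c_1<_B c_2<_B\cdots<_B c_n$ of $\ar_B$-classes, whence $\HR(B)\geq n$.

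I do not anticipate any genuine obstacle here: the two directions are symmetric applications of Lemma \ref{lem:lri}, and the only points requiring momentary care are verifying that strictness is preserved when passing between $<_B$ and $<_S$ (immediate from the second assertion of Lemma \ref{lem:lri}) and that the chosen representatives $c_i$ genuinely lie in $B$ and therefore inherit local right identities. Combining the two inequalities yields $\HR(B)=n$.
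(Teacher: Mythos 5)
Your proposal is correct and takes essentially the same approach as the paper: the paper deduces the proposition directly from Lemma \ref{lem:lri}, observing that chains of $\ar_B$-classes of length $i$ correspond exactly to chains of $\ar_S$-classes intersecting $B$ of length $i$. Your write-up simply makes the two directions of this correspondence explicit, including the routine verifications of strictness and membership in $B$.
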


The next result provides a bound on the $\ar$-height of a right ideal of a semigroup with finite $\ar$-height.

\begin{thm}
\label{thm:rightideal,bound}
Let $S$ be a semigroup with finite $\mathcal{R}$-height, and let $A$ be a right ideal of $S.$  Then $$\HR(A)\leq 2n-1,$$ where $n$ is the maximum length of a chain of $\ar$-classes of $S$ contained in $A.$
\end{thm}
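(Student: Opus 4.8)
Since a right ideal $A$ satisfies $AS^1\subseteq A$, it is in particular a bi-ideal, so Corollary \ref{cor:bi-ideal,bound} already gives $\HR(A)\le 3\HR(S)-1$; the point of this theorem is to use the stronger property $AS^1\subseteq A$ to replace the constant $3$ by $2$. The plan is to argue by contradiction exactly as in Theorem \ref{thm:bi-ideal,bound}. Assuming $\HR(A)\ge 2n$, Lemma \ref{bi-ideal chain} furnishes a chain $b_1<_A b_2<_A\cdots<_A b_{2n}$ with $b_1\in K(S)$, and from $b_i\in b_{i+1}A\subseteq b_{i+1}S$ we obtain $b_1\le_S b_2\le_S\cdots\le_S b_{2n}$. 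I would first record the elementary fact that, for a right ideal, every $\ar_S$-class meeting $A$ lies inside $A$ (if $a\in A$ and $x\,\ar_S\,a$ then $x\in aS^1\subseteq A$), so the $b_i$ occupy a chain of at most $n$ distinct $\ar_S$-classes.

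The crux is to bound how many of the $b_i$ can lie in a single $\ar_S$-class, and this is where the right ideal hypothesis does the work. The key observation to isolate is that whenever some $b_j$ has a local right identity in $A$, no later $b_k$ can be $\ar_S$-related to it: writing $b_j=b_je$ with $e\in A$ and $b_k=b_js$ with $s\in S^1$ (valid since $b_k\le_S b_j$) gives $b_k=b_j(es)$ with $es\in AS^1\subseteq A$, whence $b_k\le_A b_j$, contradicting $b_j<_A b_k$. I would then show that as soon as $b_i\,\ar_S\,b_j$ with $i<j$, the later element $b_j$ acquires such a local right identity: from $b_i=b_jc$ (with $c\in A$, since $b_i<_A b_j$ forces $b_i\neq b_j$ and hence $b_i\in b_jA$) and $b_j=b_it$ (with $t\in S^1$) one obtains $b_j=b_j(ct)$ with $ct\in AS^1\subseteq A$. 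Together these two points show that each $\ar_S$-class contains at most two of the $b_i$.

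To gain the final unit I would treat the class of $b_1$ separately. Since $b_1\in K(S)$ and the $\ar_S$-classes in the kernel are minimal, $b_1\,\ar_S\,b_1^2$, so $b_1=b_1^2u=b_1(b_1u)$ for some $u\in S^1$ with $b_1u\in AS^1\subseteq A$; thus $b_1$ already has a local right identity in $A$, and by the key observation it is the only one of the $b_i$ in its $\ar_S$-class. Hence one class holds a single $b_i$, while each of the at most $n-1$ remaining classes holds at most two, giving the length bound $1+2(n-1)=2n-1$ and contradicting the assumed length $2n$.

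I expect no serious obstacle beyond careful bookkeeping over the chain $R_{b_1}^S\le_S\cdots\le_S R_{b_{2n}}^S$ with its repetitions. The one genuinely load-bearing point is that every local right identity is manufactured \emph{inside} $A$ by an appeal to $AS^1\subseteq A$, a property enjoyed by right ideals but not by general bi-ideals; it is precisely this that lets three $\ar_S$-equivalent elements, rather than the four needed in Theorem \ref{thm:bi-ideal,bound}, already force a contradiction, and hence that improves the constant from $3$ to $2$.
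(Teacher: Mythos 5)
Your proof is correct and follows essentially the same route as the paper's: a chain of length $2n$ starting in the kernel via Lemma \ref{bi-ideal chain}, the observation that the kernel element's $\ar_S$-class contains no other chain element (via $b_1\,\ar_S\,b_1^2$ and $AS^1\subseteq A$), and the right-ideal computation showing no $\ar_S$-class can contain three chain elements, followed by counting against the bound of $n$ classes. Your packaging of these computations as two local-right-identity lemmas, with direct counting in place of the paper's generalised pigeonhole, is an equivalent reformulation whose only notable advantage is that it handles $n=1$ uniformly, whereas the paper treats that case separately using the fact that minimal right ideals are right simple.
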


\begin{proof}
Suppose first that $n=1.$  Then $A$ is a union of minimal right ideals of $S.$  Minimal right ideals are right simple subsemigroups by \cite[Theorem 2.4]{Clifford:1948}.  It follows that $A$ is a union of minimal right ideals of itself, and hence $\HR(A)=1$ by Lemma \ref{R-height1}. 
 
Now assume that $n>1.$  Suppose for a contradiction that $\HR(A)\geq 2n.$  Then, by Lemma \ref{bi-ideal chain}, there exists a chain 
$$a_1<_A a_2<_A\cdots<_A a_{2n}$$
where $a_1\in K(S).$  Then we have a chain
$$a_1\leq_S a_2\leq_S\cdots\leq_S a_{2n}.$$
By assumption, the chain 
$$R_{a_1}^S\leq_S R_{a_2}^S\leq_S\cdots\leq_S R_{a_{2n}}^S$$
has size at most $n.$
Since $a_1\in K,$ we have $a_1\,\ar_S\,a_1^2,$ so that $a_1\in a_1^2S^1\in a_1A,$ using the fact that $A$ is a right ideal of $S.$  We must then have $a_1<_S a_2,$ for otherwise we would have $a_2\in a_1S\subseteq a_1AS\subseteq a_1A.$  It follows that the chain 
$$R_{a_2}^S\leq_S R_{a_3}^S\leq_S\cdots\leq_S R_{a_{2n}}^S$$
has size at most $n-1.$
Since $\frac{2n-1}{n-1}=2+\frac{1}{n-1}>2,$ by the generalised pigeonhole principle we deduce that there exists $i\in\{1, \dots, 2n-2\}$ such that $a_i\,\ar_S\,a_{i+2}.$  We then have 
$$a_{i+2}\in a_iS\subseteq a_{i+1}AS\subseteq a_{i+1}A,$$
using the fact that $A$ is a right ideal of $S.$  But this contradicts that $a_{i+1}<_Aa_{i+2}$.  
\end{proof}

\begin{cor}
\label{cor:rightideal,bound}
Let $S$ be a semigroup with finite $\mathcal{R}$-height, and let $A$ be a right ideal of $S.$  Then $\HR(A)\leq 2\HR(S)-1.$
\end{cor}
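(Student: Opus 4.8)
The plan is to obtain this as an immediate specialisation of Theorem \ref{thm:rightideal,bound}. That theorem already supplies the bound $\HR(A)\leq 2n-1$, where $n$ denotes the maximum length of a chain of $\ar$-classes of $S$ contained in $A$, so the only work remaining is to bound $n$ by the global invariant $\HR(S)$.

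To do this, I would observe that any chain of $\ar$-classes of $S$ contained in $A$ is, forgetting the constraint that its members lie in $A$, simply a chain in the poset $S/\ar$. Since $\HR(S)$ is defined as the supremum of the lengths of all such chains, this yields $n\leq\HR(S)$. The hypothesis that $S$ has finite $\ar$-height ensures that $\HR(S)$, and hence $n$, is finite, so the comparison is meaningful.

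Combining the two inequalities, and using that $2n-1$ is increasing in $n$, I would conclude
$$\HR(A)\leq 2n-1\leq 2\HR(S)-1.$$

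I do not expect any genuine obstacle here: the result is a routine consequence of the theorem, with all the substantive work already carried out in its proof. The only subtlety worth flagging is not to conflate $n$, a parameter measured against chains of $\ar_S$-classes, with the $\ar$-height of $A$ itself; once $n$ is correctly recognised as bounded by $\HR(S)$, the corollary follows at once.
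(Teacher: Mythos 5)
Your proposal is correct and matches the paper's (implicit) reasoning: the paper states this corollary without proof as an immediate consequence of Theorem \ref{thm:rightideal,bound}, exactly via the observation that the parameter $n$ there is bounded by $\HR(S)$. Your write-up simply makes that one-line specialisation explicit, including the correct distinction between $n$ and $\HR(A)$.
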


We now turn our attention to left ideals.

\begin{thm}
\label{thm:leftideal,bound}
Let $S$ be a semigroup with finite $\mathcal{R}$-height, and let $A$ be a left ideal of $S.$  Then $$\HR(A)\leq 2n,$$ where $n$ is the maximal length of a chain of $\ar$-classes of $S$ that intersect $A.$
\end{thm}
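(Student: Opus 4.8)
I would proceed by contradiction in the same style as the right-ideal case (Theorem \ref{thm:rightideal,bound}), since a left ideal is a bi-ideal and all the machinery of Section \ref{sec:preliminaries} applies. Assuming $\HR(A)\geq 2n+1$, Lemma \ref{bi-ideal chain} furnishes a chain
$$a_1<_A a_2<_A\cdots<_A a_{2n+1}$$
with $a_1\in K(S)$, which pushes down to a chain $a_1\leq_S a_2\leq_S\cdots\leq_S a_{2n+1}$, and by hypothesis the associated chain of $\ar_S$-classes has size at most $n$. The goal is to derive a contradiction from collisions among these $\ar_S$-classes, exactly as before.

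\textbf{Key steps.} The crucial structural fact to re-establish is that from an $\ar_S$-collision two steps apart one can collapse an $<_A$ step. In the right-ideal proof, $a_i\,\ar_S\,a_{i+2}$ gives $a_{i+2}\in a_iS\subseteq a_{i+1}AS\subseteq a_{i+1}A$, contradicting $a_{i+1}<_A a_{i+2}$; the containment $a_{i+1}AS\subseteq a_{i+1}A$ used that $A$ is a \emph{right} ideal. For a left ideal this argument is not available, so I expect the mechanism to shift by one index: the natural replacement is to show that an $\ar_S$-collision \emph{two apart} forces $a_{i+1}\in a_{i+2}A$ via a left-ideal containment of the form $A S^1 \subseteq A$ combined with $a_i\in a_{i+2}S^1$, landing the offending element back in $a_{i+1}A$ or $a_{i+2}A$. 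This is precisely why the bound weakens from $2n-1$ to $2n$: I would not expect to be able to eliminate the bottom $\ar_S$-class as cheaply as in the right-ideal case (where $a_1<_S a_2$ was forced for free), so the pigeonhole count runs over the full chain of height at most $n$ rather than $n-1$. Concretely, with $2n$ strict $<_A$-steps distributed among at most $n$ distinct $\ar_S$-classes, the ratio $\tfrac{2n}{n}=2$ is not quite enough, but $\tfrac{2n+1-1}{n}$ versus the reduced count, or a careful application of the generalised pigeonhole principle to $2n+1$ elements in at most $n$ classes, should yield three mutually $\ar_S$-related elements $a_i\,\ar_S\,a_{i+1}\,\ar_S\,a_{i+2}$ consecutively, and hence the desired collapse.

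\textbf{The main obstacle.} The hard part will be finding the correct left-ideal analogue of the containment chain. In a left ideal, multiplying on the right by an arbitrary semigroup element need not stay inside $A$, so the trick of absorbing $S$ on the right fails; one must instead exploit that $a_j\leqr a_k$ means $a_j\in a_kS^1$ together with the fact that $a_j\in A$ already, and use the left-ideal property $S^1A\subseteq A$ only where it genuinely helps. I anticipate that the clean statement is: if $a_i\,\ar_S\,a_{i+1}\,\ar_S\,a_{i+2}$ then, writing $a_i=a_{i+2}s$ for some $s\in S^1$ and using a local-right-identity argument (every element of $K(S)$, and more generally every element lying below $a_1\in K$, behaves well), one produces $a_{i+1}\in a_{i+2}A$, contradicting $a_{i+1}<_A a_{i+2}$. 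Verifying that this local-right-identity or idempotent-based step goes through without completely-simple hypotheses on the kernel is where the real care is needed, and it is the step I would scrutinise most closely before committing to the exact index bookkeeping that produces the sharp constant $2n$.
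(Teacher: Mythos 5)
Your overall skeleton is the right one and matches the paper's: assume a chain $a_1<_A a_2<_A\cdots<_A a_{2n+1}$, pass to $a_1\leq_S a_2\leq_S\cdots\leq_S a_{2n+1}$, and apply the generalised pigeonhole principle (using $\tfrac{2n+1}{n}>2$) to find $i$ with $a_i\,\ar_S\,a_{i+2}$; your remark that the bottom class cannot be eliminated for free here (which is why the bound is $2n$ rather than $2n-1$) is also correct. Invoking Lemma \ref{bi-ideal chain} to arrange $a_1\in K(S)$ is harmless but unnecessary: unlike the right-ideal and bi-ideal proofs, the paper's argument for left ideals never uses the kernel.

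However, the decisive step --- the one you yourself flag as needing scrutiny --- is stated wrongly, and as stated it yields no contradiction at all. You propose to conclude from the collision that $a_{i+1}\in a_{i+2}A$, ``contradicting $a_{i+1}<_A a_{i+2}$.''  But $a_{i+1}<_A a_{i+2}$ means precisely that $a_{i+1}\in a_{i+2}A^1$ and $a_{i+2}\notin a_{i+1}A^1$; so $a_{i+1}\in a_{i+2}A$ is a consequence of the chain, not a contradiction to it.  (You also at one point cite $AS^1\subseteq A$ as ``the left-ideal containment''; that is the right-ideal property.)  The correct mechanism runs the other way: from $a_{i+1}<_A a_{i+2}$ one has $a_{i+1}\in a_{i+2}A$, and from $a_i\,\ar_S\,a_{i+2}$ one has $a_{i+2}\in a_iS$, whence
$$a_{i+1}\in a_{i+2}A\subseteq a_iSA\subseteq a_iA,$$
using the genuine left-ideal property $SA\subseteq A$.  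This places $a_{i+1}$ below $a_i$ in $\leq_A$, contradicting $a_i<_A a_{i+1}$.  In particular, the element that gets collapsed is $a_{i+1}$ relative to $a_i$, not $a_{i+2}$ relative to $a_{i+1}$, and no local-right-identity, idempotent or kernel-regularity argument is needed anywhere: the step you anticipated as the ``main obstacle'' is a purely formal containment.
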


\begin{proof}
Suppose for a contradiction that there exists a chain 
$$a_1<_A a_2<_A\cdots<_A a_{2n+1}.$$
Then we have a chain
$$a_1\leq_S a_2\leq_S\cdots\leq_S a_{2n+1}.$$
By assumption, the chain 
$$R_{a_1}^S\leq_S R_{a_2}^S\leq_S\cdots\leq_S R_{a_{2n+1}}^S$$
has size at most $n.$
Since $\frac{2n+1}{n}=2+\frac{1}{n}>2,$ by the generalised pigeonhole principle we obtain $i\in\{1, \dots, 2n-1\}$ such that $a_i\,\ar_S\,a_{i+2}.$  Thus $a_{i+2}\in a_iS.$
We then have
$$a_{i+1}\in a_{i+2}A\subseteq a_iSA\subseteq a_iA,$$
using the fact that $A$ is a left ideal of $S.$ But this contradicts that $a_i<_Aa_{i+1}$.
\end{proof}

\begin{cor}
\label{cor:leftideal,bound}
Let $S$ be a semigroup with finite $\mathcal{R}$-height, and let $A$ be a left ideal of $S.$  Then $\HR(A)\leq 2\HR(S).$
\end{cor}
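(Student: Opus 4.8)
The plan is to prove Corollary \ref{cor:leftideal,bound} as an immediate consequence of Theorem \ref{thm:leftideal,bound}, exactly mirroring how Corollaries \ref{cor:bi-ideal,bound}, \ref{cor:bi-ideal,CS,bound} and \ref{cor:rightideal,bound} follow from their respective theorems. The key observation is that the quantity $n$ in Theorem \ref{thm:leftideal,bound} -- the maximal length of a chain of $\ar_S$-classes that intersect $A$ -- is bounded above by $\HR(S)$, since any such chain is in particular a chain of $\ar_S$-classes of $S$, and $\HR(S)$ is by definition the supremum of the lengths of all chains of $\ar$-classes of $S$.

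First I would invoke Theorem \ref{thm:leftideal,bound} to obtain $\HR(A)\leq 2n$, where $n$ is the maximal length of a chain of $\ar_S$-classes that intersect $A$. Then I would note that a chain of $\ar_S$-classes intersecting $A$ is a fortiori a chain of $\ar_S$-classes of $S$, so its length is at most $\HR(S)$; taking the supremum over such chains gives $n\leq\HR(S)$. Combining these two inequalities yields $\HR(A)\leq 2n\leq 2\HR(S)$, which is the desired bound.

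There is essentially no obstacle here, since the result is a direct specialisation of the theorem using the trivial monotonicity $n\leq\HR(S)$. The only point requiring a moment's care is that $n$ is finite: this is guaranteed because $S$ has finite $\ar$-height by hypothesis, so $\HR(S)<\infty$ and hence $n\leq\HR(S)<\infty$. Given the brevity and routine nature of the argument, I would present the proof in one or two sentences, as the paper does for the analogous corollaries.

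\begin{proof}
By Theorem \ref{thm:leftideal,bound}, we have $\HR(A)\leq 2n,$ where $n$ is the maximal length of a chain of $\ar$-classes of $S$ that intersect $A.$  Since any such chain is in particular a chain of $\ar$-classes of $S,$ we have $n\leq\HR(S),$ and therefore $\HR(A)\leq 2\HR(S).$
\end{proof}
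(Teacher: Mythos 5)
Your proof is correct and follows exactly the route the paper intends: the corollary is an immediate consequence of Theorem \ref{thm:leftideal,bound} together with the trivial bound $n\leq\HR(S)$, which is why the paper states it without proof. Nothing is missing.
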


Again, we obtain a slightly shorter bound in the case that $K(S)$ is completely simple.

\begin{thm}
\label{thm:leftideal,CS,bound}
Let $S$ be a semigroup with finite $\mathcal{R}$-height whose kernel is completely simple, and let $A$ be a left ideal of $S.$  Then $$\HR(A)\leq 2n-1,$$ where $n$ is the maximal length of a chain of $\ar$-classes of $S$ that intersect $A.$
\end{thm}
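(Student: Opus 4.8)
The plan is to follow the proof of Theorem \ref{thm:leftideal,bound}, but to save one unit in the bound by exploiting the completely simple kernel in exactly the way that the passage from Theorem \ref{thm:bi-ideal,bound} to Theorem \ref{thm:bi-ideal,CS,bound} does. Suppose for a contradiction that $\HR(A)\geq 2n$. Since a left ideal is in particular a bi-ideal, Lemma \ref{bi-ideal chain} supplies a chain $a_1<_A a_2<_A\cdots<_A a_{2n}$ with $a_1\in K(S)$. As usual, $a_i\in a_{i+1}A\subseteq a_{i+1}S$ gives a chain $a_1\leq_S a_2\leq_S\cdots\leq_S a_{2n}$, and since each $a_i$ lies in $A$, the induced chain $R_{a_1}^S\leq_S\cdots\leq_S R_{a_{2n}}^S$ of $\ar_S$-classes has size at most $n$.

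The first key step is to upgrade $a_1\leq_S a_2$ to $a_1<_S a_2$, which is where complete simplicity of $K=K(S)$ enters. Suppose instead that $a_1\,\ar_S\,a_2$. Since $a_1\in K$ and $K$ is an ideal, $K$ is downward closed under $\leq_S$, so from $a_2\leq_S a_1$ we get $a_2\in K$, whence $a_2\in A\cap K$; as also $a_1\in A\cap K$, Corollary \ref{cor:bi-ideal,CS} yields $a_2\leq_A a_1$, contradicting $a_1<_A a_2$. Hence $a_1<_S a_2$. In particular $n\geq 2$, and the truncated chain $R_{a_2}^S\leq_S\cdots\leq_S R_{a_{2n}}^S$ now has size at most $n-1$.

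It then remains to run the pigeonhole argument of Theorem \ref{thm:leftideal,bound} on the $2n-1$ elements $a_2,\dots,a_{2n}$, whose $\ar_S$-classes form a chain of size at most $n-1$. Since $\frac{2n-1}{n-1}=2+\frac{1}{n-1}>2$, the generalised pigeonhole principle forces some $\ar_S$-class to contain at least three of these elements; as they lie in a $\leq_S$-chain, this produces an index $i\in\{2,\dots,2n-2\}$ with $a_i\,\ar_S\,a_{i+2}$, and hence $a_{i+2}\in a_iS$. Using that $A$ is a left ideal we then obtain
$$a_{i+1}\in a_{i+2}A\subseteq a_iSA\subseteq a_iA,$$
so $a_{i+1}\leq_A a_i$, contradicting $a_i<_A a_{i+1}$. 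This contradiction completes the proof. The argument is otherwise routine; the one step that genuinely requires the hypothesis, and that is responsible for the improved constant, is the justification that $a_2\in K$ in the second paragraph: it is precisely the downward-closure of the kernel under $\leq_S$ that makes Corollary \ref{cor:bi-ideal,CS} applicable and rules out $a_1\,\ar_S\,a_2$.
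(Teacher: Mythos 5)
Your proof is correct and follows essentially the same route as the paper's: Lemma \ref{bi-ideal chain} to place $a_1$ in the kernel, Corollary \ref{cor:bi-ideal,CS} to upgrade $a_1\leq_S a_2$ to $a_1<_S a_2$, and then the pigeonhole argument of Theorem \ref{thm:leftideal,bound} on the truncated chain. In fact you are slightly more careful than the paper, which applies Corollary \ref{cor:bi-ideal,CS} without explicitly noting that $a_2\in K$ (via the downward closure of the kernel under $\leq_S$) is needed for both elements to lie in $A\cap K$.
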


\begin{proof}
Suppose for a contradiction that $\HR(A)\geq 2n.$  Then, by Lemma \ref{bi-ideal chain}, there exists a chain 
$$a_1<_A a_2<_A\cdots<_A a_{2n},$$
where $a_1\in K(S).$
Then we have a chain
$$a_1\leq_S a_2\leq_S\cdots\leq_S a_{2n}.$$
By assumption, the chain 
$$R_{a_1}^S\leq_S R_{a_2}^S\leq_S\cdots\leq_S R_{a_{2n}}^S$$
has size at most $n.$  We cannot have $a_1\,\ar_S\,a_2,$ since that would imply that $a_1\,\ar_A\,a_2$ by Corollary \ref{cor:bi-ideal,CS}, so $a_1<_Sa_2.$  Therefore, the chain 
$$R_{a_2}^S\leq_S R_{a_3}^S\leq_S\cdots\leq_S R_{a_{2n}}^S$$
has size at most $n-1.$
Since $\frac{2n-1}{n-1}>2,$ by the generalised pigeonhole principle we obtain $i\in\{2, \dots, 2n-2\}$ such that $a_i\,\ar_S\,a_{i+2}.$  But then the same argument as that of Theorem \ref{thm:leftideal,bound} yields a contradiction.
\end{proof}

\begin{cor}
\label{cor:leftideal,CS,bound}
Let $S$ be a semigroup with finite $\mathcal{R}$-height whose kernel is completely simple, and let $A$ be a left ideal of $S.$  Then $\HR(A)\leq 2\HR(S)-1.$
\end{cor}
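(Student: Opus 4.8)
The plan is to deduce this corollary directly from Theorem \ref{thm:leftideal,CS,bound}, which already establishes the sharper bound $\HR(A)\leq 2n-1$, where $n$ is the maximal length of a chain of $\ar$-classes of $S$ that intersect $A$. All the genuine content — the pigeonhole argument combined with the local-right-identity behaviour in a completely simple kernel, via Corollary \ref{cor:bi-ideal,CS} — has already been carried out there, so the only task remaining is to bound $n$ in terms of $\HR(S)$.

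The key observation is that any chain of $\ar_S$-classes that intersect $A$ is, in particular, a chain of $\ar_S$-classes of $S$, with no reference to $A$ needed once the chain is exhibited. Its length is therefore bounded above by the height of the poset $S/\ar_S$, which is by definition $\HR(S)$. Taking the supremum over all such chains gives $n\leq\HR(S)$.

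Combining these two facts, I would conclude $\HR(A)\leq 2n-1\leq 2\HR(S)-1$, as required. I do not anticipate any obstacle: this is a routine specialisation of the preceding theorem, exactly parallel to how Corollary \ref{cor:leftideal,bound} is obtained from Theorem \ref{thm:leftideal,bound} and how Corollary \ref{cor:bi-ideal,CS,bound} is obtained from Theorem \ref{thm:bi-ideal,CS,bound}.
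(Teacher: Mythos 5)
Your proposal is correct and matches the paper's (implicit) argument exactly: the corollary is stated without proof as an immediate consequence of Theorem \ref{thm:leftideal,CS,bound}, precisely because any chain of $\ar_S$-classes intersecting $A$ has length at most $\HR(S)$, giving $n\leq\HR(S)$ and hence $\HR(A)\leq 2n-1\leq 2\HR(S)-1.$
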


If a left ideal $A$ of $S$ is contained in the set $\text{Reg}(S)$ of regular elements,  then for any $a\in S$ we have $a\in aSa\in aA,$ using the fact that $a$ is regular and $A$ is a left ideal, so every element of $A$ has a local right identity in $A.$  Thus, by Proposition \ref{prop:lri}, we have:

\begin{prop}
\label{prop:left ideal,reg}
Let $S$ be a semigroup with finite $\mathcal{R}$-height, and let $A$ be a left ideal of $S$ such that $A\subseteq\emph{Reg}(S).$  Then $$\HR(A)=n,$$ where $n$ is the maximal length of a chain of $\ar$-classes of $S$ that intersect $A.$ 
\end{prop}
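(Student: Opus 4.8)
The plan is to deduce the proposition directly from Proposition \ref{prop:lri}, which already handles any bi-ideal in which every element has a local right identity in the bi-ideal. First I would observe that a left ideal is a special case of a bi-ideal: if $A$ is a left ideal of $S$, then $AA\subseteq SA\subseteq A$, so $A$ is a subsemigroup, and $AS^1A\subseteq S^1A\subseteq A$, so $A$ satisfies the defining condition of a bi-ideal. Hence Proposition \ref{prop:lri} is applicable to $A$, and the whole argument reduces to verifying its hypothesis, namely that every element of $A$ has a local right identity \emph{in} $A$.

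The crux is precisely this verification, and it is where the assumption $A\subseteq\text{Reg}(S)$ is used. Fix $a\in A$. Since $a$ is regular in $S$, there exists $b\in S$ with $a=aba$. Because $A$ is a left ideal, we have $ba\in SA\subseteq A$, and therefore
$$a=a(ba)\in aA.$$
Thus $a$ has a local right identity in $A$. The key subtlety worth flagging is that regularity alone only guarantees $a\in aSa$; it is the left-ideal property that pulls the witness $ba$ back into $A$, so that the local right identity actually lies in $A$ rather than merely in $S$. This is exactly the form required by Lemma \ref{lem:lri} and hence by Proposition \ref{prop:lri}.

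With the hypothesis confirmed, Proposition \ref{prop:lri} immediately yields $\HR(A)=n$, where $n$ is the maximal length of a chain of $\ar$-classes of $S$ that intersect $A$, completing the argument. I do not anticipate any genuine obstacle here: the statement is an easy corollary once the local-right-identity condition is established, and the only step that calls for a moment's care is the observation above that regularity together with the left-ideal structure forces the local right identity into $A$.
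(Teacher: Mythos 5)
Your proposal is correct and takes essentially the same route as the paper: the paper's (inline) proof likewise derives $a\in aSa\subseteq aA$ from regularity together with $Sa\subseteq A$, concludes that every element of $A$ has a local right identity in $A$, and then invokes Proposition \ref{prop:lri}. If anything, your write-up is slightly more careful, since you explicitly check that a left ideal is a bi-ideal (so that Proposition \ref{prop:lri} applies), a point the paper leaves implicit.
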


Proposition \ref{prop:left ideal,reg} does not hold if we replace `left ideal' by `right ideal', as the following example demonstrates.

\begin{ex}
\label{ex:Brandt}
Let $S$ be the semigroup with universe $\{(1, 1), (1, 2), (2, 1), (2, 2), 0\}$ and multiplication given by 
$$(i, j)(k, l)=
\begin{cases}
(i, l)&\text{ if }j=k\\
0&\text{ otherwise,}
\end{cases}$$
and $0(i, j)=(i, j)0=00=0.$  Then $S$ is a completely 0-simple inverse semigroup, where the inverse of each $(i, j)$ is $(j, i).$  Consider the right ideal 
$$A=(1, 1)S^1=\{(1, 1), (1, 2), 0\}.$$
Certainly $A\subseteq\text{Reg}(S)=S$.  It is straightforward to verify that the posets of $\ar$-classes of $S$ and $A$ are as presented in Figure \ref{fig:Brandt} below, and hence $\HR(S)=2$ and $\HR(A)=3.$
\end{ex}

\begin{center}
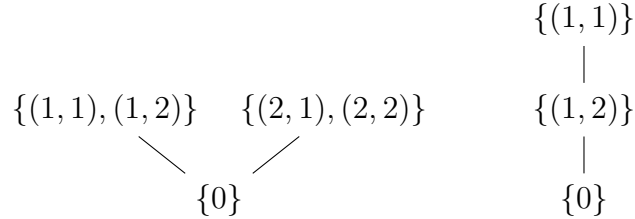
\begin{figure}[H]
\begin{tikzpicture}[scale=.6]
  \node (a) at (-2.5,2) {$\{(1, 1), (1, 2)\}$};
  \node (b) at (2.5,2) {$\{(2, 1), (2, 2)\}$};
  \node (zero) at (0,0) {$\{0\}$};
  \draw (a) -- (zero) -- (b);
\node (c) at (8,4) {$\{(1, 1)\}$};
  \node (d) at (8,2) {$\{(1, 2)\}$};
  \node (0) at (8,0) {$\{0\}$};
\draw (c) -- (d) -- (0) ;
\end{tikzpicture}
\caption{The poset of $\ar_S$-classes (left) and the poset of $\ar_A$-classes (right).}\label{fig:Brandt}
\end{figure}
\vspace{-2.5em}
\end{center}

We now provide a bound on the $\ar$-height of an ideal.

\begin{thm}
\label{thm:ideal,bound}
Let $S$ be a semigroup with finite $\mathcal{R}$-height, and let $A$ be an ideal of $S.$  Then $$\HR(A)\leq n,$$ where $n$ is the maximum length of a chain of $\ar$-classes of $S$ contained in $A.$
\end{thm}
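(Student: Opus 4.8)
The plan is to show directly, without recourse to the pigeonhole principle, that any chain of $\ar$-classes of $A$ lifts to an equally long \emph{strict} chain of $\ar_S$-classes that is contained in $A$; the bound $\HR(A)\leq n$ is then immediate. This is cleaner than the arguments for one-sided and bi-ideals, because for a two-sided ideal no loss of length occurs and the kernel machinery of Lemma \ref{bi-ideal chain} is not needed. So I would start from an arbitrary chain $a_1<_A a_2<_A\cdots<_A a_m$ of representatives in $A$ and aim to prove that $R_{a_1}^S<_S R_{a_2}^S<_S\cdots<_S R_{a_m}^S$ is a strict chain with every $R_{a_i}^S\subseteq A$, forcing $m\leq n$.

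The first ingredient is that every $\ar_S$-class meeting $A$ is already contained in $A$: if $b\,\ar_S\,a_i$ then $b\leq_S a_i$, so $b\in a_iS^1\subseteq A$ since $A$ is a right ideal containing $a_i$. Thus the classes $R_{a_i}^S$ are genuine $\ar_S$-classes contained in $A$, and it remains only to control their comparisons. The easy direction gives the non-strict chain at once: from $a_i\leq_A a_{i+1}$ we get $a_i\in a_{i+1}A^1\subseteq a_{i+1}S^1$, hence $a_i\leq_S a_{i+1}$ for every $i$.

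The main obstacle is establishing \emph{strictness}, i.e.\ ruling out $a_i\,\ar_S\,a_{i+1}$; here I would exploit both sides of the ideal. Suppose $a_i\,\ar_S\,a_{i+1}$. Since $a_i\leq_A a_{i+1}$ and $a_i\neq a_{i+1}$, there is $u\in A$ with $a_i=a_{i+1}u$; and since $a_{i+1}\leq_S a_i$ there is $t\in S^1$, necessarily with $t\neq 1$ as $a_i\neq a_{i+1}$, so $t\in S$, with $a_{i+1}=a_it$. Substituting the second into the first yields $a_i=a_i(tu)$, so $tu\in SA\subseteq A$ serves as a local right identity for $a_i$ lying \emph{in} $A$. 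Feeding this back in gives $a_{i+1}=a_it=a_i(tu)t=a_i(tut)$ with $tut\in SAS\subseteq A$, whence $a_{i+1}\in a_iA$ and so $a_{i+1}\leq_A a_i$, contradicting $a_i<_A a_{i+1}$. This is precisely the step where two-sidedness of $A$ is indispensable, since it is used both to place $tu$ and $tut$ in $A$, and it explains why the ideal bound improves on the $2n-1$ and $3n-1$ bounds for one-sided and bi-ideals.

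Putting these together, each consecutive pair satisfies $a_i<_S a_{i+1}$, so $R_{a_1}^S<_S\cdots<_S R_{a_m}^S$ is a chain of $m$ distinct $\ar_S$-classes contained in $A$; by the definition of $n$ we conclude $m\leq n$, and as the chain was arbitrary, $\HR(A)\leq n$. I do not anticipate any difficulty beyond the strictness step; in particular no pigeonhole counting and no appeal to complete simplicity of the kernel should be required.
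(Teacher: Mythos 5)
Your proposal is correct, and it is essentially the paper's own argument: the paper assumes a chain of length $n+1$ in $A$, lifts it to a chain of $\ar_S$-classes contained in $A$ (contained, not just meeting, because $A$ is a right ideal), uses pigeonhole to find an adjacent pair with $a_i\,\ar_S\,a_{i+1}$, and refutes it by exactly your computation $a_{i+1}\in a_iSAS\subseteq a_iA$, contradicting $a_i<_Aa_{i+1}$. Your ``direct strictness'' formulation is just the contrapositive packaging of that same key step (and the paper's proof of this theorem likewise makes no use of the kernel lemma), so the two proofs coincide in substance.
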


\begin{proof}
Suppose for a contradiction that there exists a chain 
$$a_1<_A a_2<_A\cdots<_A a_{n+1}.$$
Then we have a chain 
$$a_1\leq_S a_2\leq_S\cdots\leq_S a_{n+1}.$$
By assumption, the chain 
$$R_{a_1}^S\leq_S R_{a_2}^S\leq_S\cdots\leq_S R_{a_{n+1}}^S$$
has size at most $n.$
By the pigeonhole principle there exists $i\in\{1, \dots, n\}$ such that $a_i\,\ar_S\,a_{i+1}.$  Then we have
$$a_{i+1}\in a_iS\subseteq a_{i+1}AS\subseteq a_iSAS\subseteq a_iA,$$
using the fact $A$ is an ideal of $S.$ But this contradicts that $a_i<_Aa_{i+1}.$ 
\end{proof}

\begin{cor}
\label{cor:ideal,bound}
Let $S$ be a semigroup with finite $\mathcal{R}$-height, and let $A$ be an ideal of $S.$  Then $\HR(A)\leq\HR(S).$
\end{cor}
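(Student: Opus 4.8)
The plan is to derive this corollary directly from Theorem \ref{thm:ideal,bound}, which has already done all the substantive work. The theorem gives $\HR(A)\leq n$, where $n$ denotes the maximum length of a chain of $\ar$-classes of $S$ contained in $A$. So the only thing that remains is to compare this quantity $n$ with $\HR(S)$.

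The key observation I would make is that any chain of $\ar$-classes of $S$ that is contained in $A$ is, in particular, a chain of $\ar$-classes of $S$. Hence the maximum length $n$ of such a chain cannot exceed the supremum of the lengths of all chains of $\ar$-classes of $S$, which is precisely $\HR(S)$ by definition. This yields $n\leq\HR(S)$.

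Combining the two inequalities, I would conclude
$$\HR(A)\leq n\leq\HR(S),$$
which is exactly the desired bound.

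I do not anticipate any genuine obstacle here, since the result is an immediate corollary: the heavy lifting (the pigeonhole argument exploiting that $A$ is a two-sided ideal) is contained in Theorem \ref{thm:ideal,bound}. The only point requiring a moment's care is the monotonicity observation $n\leq\HR(S)$, namely that restricting attention to chains contained in $A$ can only decrease the supremum of chain lengths; this is transparent from the definition of $\ar$-height as a supremum over all chains.
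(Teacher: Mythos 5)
Your proposal is correct and is exactly how the paper intends this corollary to be read: Theorem \ref{thm:ideal,bound} gives $\HR(A)\leq n$, and since any chain of $\ar$-classes of $S$ contained in $A$ is a chain of $\ar$-classes of $S$, one has $n\leq\HR(S)$. The paper leaves this deduction implicit (the corollary is stated without proof), and your write-up supplies precisely that missing one-line argument.
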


We conclude this section by demonstrating that the $\ar$-height of an ideal can be substantially lower than the bound provided in Theorem \ref{thm:ideal,bound}.

\begin{ex}
For any $n\in\N,$ let $T$ be a semigroup such that $\HR(T)=n.$  Let $N=\{x_a : a\in T\}\cup\{0\}$ be a set disjoint from $T.$  We turn $N$ into a null semigroup by defining $xy=0$ for all $x, y\in N.$  Let $S=T\cup N,$ and define a multiplication on $S,$ extending those on $T$ and $N,$ as follows:
$$ax_b=x_ab=x_{ab}\;\text{ and }\;a0=0a=0$$
for all $a, b\in T.$  Then $N$ is an ideal of $S.$  It is straightforward to show that for any $a, b\in T,$ we have $a<_Tb$ if and only if $x_a<_Sx_b$; and clearly $0<_S x_a.$  It follows that the maximum length of a chain of $\ar$-classes of $S$ contained in $N$ is $n+1.$  On the other hand, it is easy to see that $\HR(N)=2.$
\end{ex}

\section{\large{Attaining the Bounds}\nopunct}
\label{sec:attainbounds}

Given Theorems \ref{thm:bi-ideal,bound}-\ref{thm:ideal,bound}, an immediate question arises: Can the bounds established in these results be attained?  In fact, one can ask a stronger question: Can these bounds be attained for every natural number $n$ and in such a way that $\HR(S)=n$?  More precisely, we have the following problems.
\begin{enumerate}[leftmargin=*]
\item For each $n\in\mathbb{N},$ does there exist a semigroup $S$ with a bi-ideal $B$ such that $\HR(S)=n$ and $\HR(B)=3n-1$?
\item For each $n\in\mathbb{N},$ does there exist a semigroup $S$ with a completely simple kernel and a bi-ideal $B$ such that $\HR(S)=n$ and $\HR(B)=3n-2$?
\item For each $n\in\mathbb{N},$ does there exist a semigroup $S$ with a right ideal $A$ such that $\HR(S)=n$ and $\HR(A)=2n-1$?
\item For each $n\in\mathbb{N},$ does there exist a semigroup $S$ with a left ideal $A$ such that $\HR(S)=n$ and $\HR(A)=2n$?
\item For each $n\in\mathbb{N},$ does there exist a semigroup $S$ with a completely simple kernel and a left ideal $A$ such that $\HR(S)=n$ and $\HR(A)=2n-1$?
\item For each $n\in\mathbb{N},$ does there exist a semigroup $S$ with an ideal $A$ such that $\HR(S)=\HR(A)=n$?
\end{enumerate}

Unfortunately, we have not been able to answer question (1).

\begin{prob}
For each $n\in\mathbb{N},$ does there exist a semigroup $S$ with a bi-ideal $B$ such that $\HR(S)=n$ and $\HR(B)=3n-1$?
\end{prob}

We shall answer questions (2)-(6) in the positive.  Question (6) is easily dealt with: take $S$ to be any semigroup with $\HR(S)=n$ and set $A=S.$\par
We now consider question (2).  The case $n=1$ is trivial: we can just take $S$ to be the trivial semigroup and $B=S$; then $\HR(S)=\HR(B)=1=3(1)-2.$  For $n\geq 2,$ the following result provides the desired semigroups.

\begin{thm}
\label{thm:bi-ideal}
Let $n\geq 2.$  Let $S$ be the finite semigroup defined by the presentation
\begin{align*}
\langle x, y, z, t\,|\,&xyzt=x,\, yzty=y,\, ztyz=z,\, tyzt=t, w=0\\
&(w\in\{x^n,\, y^2,\, z^2,\, t^2,\, xz,\, xt,\, yx,\, yt,\, zx,\, zy,\, tz,\, tx^{n-1}\})\rangle.
\end{align*}
Let $X=\{x, y, z, tx\}\subseteq S$ and let $B=X\cup XS^1X$ ($B$ is the smallest bi-ideal of $S$ containing $X$).  Then $\HR(S)=n$ and $\HR(B)=3n-2.$
\end{thm}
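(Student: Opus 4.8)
The plan is to first pin down the elements and multiplication of $S$ via a complete rewriting system, and then read off both $\HR(S)$ and the structure of $B$ from the resulting normal forms. Orienting the non-trivial relations as length-reducing rules $xyzt\to x$, $yzty\to y$, $ztyz\to z$, $tyzt\to t$ together with the rules $w\to 0$, I would verify completeness by resolving the finitely many critical pairs, as permitted by \cite[Lemma 2.4]{Huet}. The irreducible words are then: the nine ``Brandt'' words $y,yz,yzt,z,zt,zty,t,ty,tyz$ generated by $y,z,t$ (these form a copy of the $3\times 3$ Brandt semigroup, since $yzt,zty,tyz$ are idempotent and the remaining products follow the matrix-unit pattern); the words $x^k,\,x^ky,\,x^kyz$ for $1\le k\le n-1$; and the words $p\,x^m$, $p\,x^my$, $p\,x^myz$ for $p\in\{t,zt,yzt\}$ and $1\le m\le n-2$. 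In particular $S$ is finite.

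Next I would compute $\HR(S)$. A short calculation with the normal forms shows that $\{x^k,x^ky,x^kyz\}$ is a single $\ar_S$-class for each $k$ (we have $x^ky,x^kyz\in x^kS$, and conversely $x^k=x^kyzt$, so $x^k\leq_S x^kyz$), and likewise each set $\{p\,x^m,\,px^my,\,px^myz\}$. Since right multiplication of a non-zero word never decreases the power of the leading run of $x$'s and never changes its first letter, the only comparabilities are the evident ones. This yields exactly four maximal chains of $\ar_S$-classes — one descending through $[x]>[x^2]>\cdots>[x^{n-1}]$, and three descending from the Brandt rows $[y],[z],[t]$ through the classes $[p\,x^m]$ — each of length $n$ and meeting only at $[0]$. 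Hence $\HR(S)=n$. As $S$ has a zero, its kernel is $K(S)=\{0\}$, which is trivially completely simple, so Corollary \ref{cor:bi-ideal,CS,bound} already supplies the upper bound $\HR(B)\le 3n-2$.

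It then remains to exhibit a chain of $\ar_B$-classes of length $3n-2$. I claim the required chain is
$$x>_B xy>_B xyz>_B x^2>_B x^2y>_B x^2yz>_B\cdots>_B x^{n-1}>_B x^{n-1}y>_B x^{n-1}yz>_B 0,$$
which has $3(n-1)+1=3n-2$ terms. First I would check every term lies in $B$ by writing it as a sandwich $a\,s\,b$ with $a,b\in X$: for instance $x\in X$, and for $k\ge 2$ one has $x^k=x\cdot x^{k-2}\cdot x$, $x^ky=x\cdot x^{k-1}\cdot y$ and $x^kyz=x\cdot x^{k-1}y\cdot z$. The descending relations $\le_B$ follow from $x^ky=x^k\cdot y$, $x^kyz=x^ky\cdot z$, and — crucially — $x^{k+1}=x^kyz\cdot(tx)$, the last of which is exactly where the generator $tx\in X$ is used to link consecutive blocks (and $x^{n-1}yz\cdot(tx)=x^n=0$ links the final block to the kernel).

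Finally, strictness. The inter-block steps $x^{k+1}<_B x^kyz$ and $0<_B x^{n-1}yz$ come for free: these pairs lie in distinct $\ar_S$-classes, and $\ar_B\subseteq\ar_S$ since $B\subseteq S$, so they cannot be $\ar_B$-related. The only genuine work is the intra-block strictness $x^kyz<_B x^ky<_B x^k$, where the three elements are $\ar_S$-equivalent. Here I would show that the multipliers witnessing their $\ar_S$-collapse — namely $zt$ (giving $x^ky\cdot zt=x^k$) and $ty$ (giving $x^kyz\cdot ty=x^ky$) — do not lie in $B$, and that $B$ contains no substitute: the only normal forms in $B$ beginning with $z$ are $z$, $zty$ and the $x$-contaminated words $ztx^m$, $ztx^my$, $ztx^myz$, none of which sends $x^ky$ back to $x^k$ (the contaminated ones strictly raise the leading power of $x$), and similarly every $t$-initial element of $B$ is $tx$-contaminated. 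Determining precisely which normal forms lie in $B=X\cup XS^1X$ is the main obstacle, and I expect the bulk of the work to be the bookkeeping needed to verify these two facts about the $z$-initial and $t$-initial elements of $B$. Granting them, the displayed chain is strictly descending, so $\HR(B)\ge 3n-2$; combined with the upper bound from Corollary \ref{cor:bi-ideal,CS,bound} this gives $\HR(B)=3n-2$.
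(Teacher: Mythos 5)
Your proposal is correct and takes essentially the same approach as the paper's proof: the same complete length-reducing rewriting system and normal form, the same identification of the $\ar_S$-classes yielding $\HR(S)=n$, the upper bound via Corollary \ref{cor:bi-ideal,CS,bound} after noting $K(S)=\{0\}$, and the same witnessing chain $x>_B xy>_B xyz>_B x^2>_B\cdots>_B x^{n-1}yz>_B 0$, whose intra-block strictness the paper settles by exactly the bookkeeping you defer (it shows $B=S\setminus\{t,zt,ty,yzt,tyz\}$ and computes $x^iyB$ and $x^iyzB$ explicitly, matching your description of the $z$-initial and $t$-initial elements of $B$). One small repair: the incomparability invariant should be the total number of occurrences of $x$ in a word (which nonzero right multiplication never decreases, since the nonzero rewriting rules preserve it), not the leading run of $x$'s, because the latter is $0$ for every word in the $y$-, $z$- and $t$-families and so fails to separate, e.g., $yztx$ from $yztx^2$.
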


\begin{proof}
We begin by finding a normal form for $S.$  It is straightforward to show that the associated rewriting system of the presentation for $S$ is complete.  That it is noetherian follows from the fact that all the rewriting rules are length-reducing.  For confluence, it suffices to check that all the critical pairs resolve.  For instance, $(0tyz, xz)$ is a critical pair, since $xztyz\to 0tyz$ and $xztyz\to xz,$ and clearly both sides of this pair reduce to 0.  This rewriting system, therefore, yields the following normal form for $S,$ consisting of all words over $\{x, y, z, t\}$ that do not contain as a subword the left-hand side of any of the rewriting rules, along with 0:
$$\{x^i,\, x^iy,\, x^iyz : 1\leq i\leq n-1\}\cup\{y,\, yz,\, yzt,\, z,\, zt,\, zty,\, t,\, ty,\, tyz\}\cup U\cup\{0\},$$
where 
$$U=\{yztx^i,\, yztx^iy,\, yztx^iyz,\, ztx^i,\, ztx^iy,\, ztx^iyz,\, tx^i,\, tx^iy,\, tx^iyz : 1\leq i\leq n-2\}.$$
We note that $U=\emptyset$ if $n=2.$
It is easy to calculate that $|S|=12(n-1)+1.$  Using the above normal form and the relations of the presentation, it is easy to show that $t, zt, ty, yzt, tyz\notin B.$  All other elements in the normal form have the form $u$ or $uwv$ where $u, v\in X$ and $w\in\{x, y, z, t\}^{\ast},$ so they belong to $B.$  Thus $B=S\!\setminus\!\{t, zt, ty, yzt, tyz\}.$

From the relations of the presentation, it follows that for each generator $u\in\{x, y, z, t\},$ the principal right ideal $uS^1$ consists of precisely the words in the normal form whose first letter is $u,$ along with $0.$  Thus, for any two generators $u, v\in\{x, y, z, t\}, u\neq v,$ there are no elements of the form $uw$ and $vw^{\prime}$ in $S$ such that $uw\,\ar_S\,vw^{\prime}.$

Now let $R_i=\{x^i, x^iy, x^iyz\}$ ($1\leq i\leq n-1$), $S_1=\{y, yz, yzt\},$ $T_1=\{z, zt, zty\}$ and $U_1=\{t, ty, tyz\}.$  It is easy to see from the presentation that each of the sets is an $\ar_S$-class.  For $i\in\{2, \dots, n-1\},$ let 
$$S_i=yztR_{i-1}=\{yztx^{i-1}, yztx^{i-1}y, yztx^{i-1}yz\},$$
$T_i=ztR_{i-1}$ and $U_i=tR_{i-1}.$  Since $\ar_S$ is a left congruence on $S,$ and $R_{i-1}$ is an $\ar_S$-class, each $S_i,$ $T_i$ and $U_i$ is also an $\ar_S$-class.  Of course, $\{0\}$ is both an $\ar_S$-class and an $\ar_B$-class.

Consider $i\in\{1, \dots, n-2\}.$  Clearly $R_i\geq R_{i+1}.$  Also, we have $yztx^{i-1}yz(tx)=yztx^i\in S_{i+1},$ so $S_i\geq S_{i+1}$.  Similarly, $T_i\geq T_{i+1}$ and $U_i\geq U_{i+1}.$  It is easy to see from the presentation that for any $v\in S$ and $s\in S^1$ with $vs\neq 0,$ we have $|v|_x\leq|vs|_x,$ where $|w|_x$ denotes the number of appearances of $x$ in $w.$  Thus $x^i\notin x^{i+1}S^1,$ so $R_i>R_{i+1}.$  Similarly, we have $S_i>S_{i+1},$ $T_i>T_{i+1}$ and $U_i>U_{i+1}.$  We conclude that the poset of $\ar_S$-classes is as displayed in Figure \ref{fig:bi-ideal} below, so that $\HR(S)=n.$  

Turning our attention to $B,$ we have 
$$x^i\geq_B x^iy\geq_B x^iyz\geq_B x^iyz(tx)=x^{i+1}.$$
We certainly have $x^iyz>_B x^{i+1}$ since $x^iyz>_S x^{i+1}.$  Also, it is easy to calculate that
$$x^iyB=\{x^iy,\, x^iyz,\, x^j,\, x^jy,\, x^jyz,\, 0 : i+1\leq j\leq n-1\}$$
and 
$$x^iyzB=\{x^j,\, x^jy,\, x^jyz,\, 0 : i+1\leq j\leq n-1\},$$
so $(x^i, x^iy), (x^iy , x^iyz)\notin\ar_B.$  Thus we have a chain
$$x>_B xy>_B xyz>_B x^2>_B x^2y>_B x^2yz>_B\cdots>_B x^{n-1}>_B x^{n-1}y>_B x^{n-1}yz>_B 0,$$
so $\HR(B)\geq 3n-2.$  By Theorem \ref{thm:bi-ideal,CS,bound}, we have $\HR(B)\leq 3n-2.$  Thus $\HR(B)=3n-2.$
%Suppose that $n=2.$  Then $R_1,$ $S_1,$ $T_1,$ $U_1$ and $\{0\}$ are all the $\ar_S$-classes.  We have that 
%$$B=\{x,\, xy,\, xyz,\, y,\, yz,\, z,\, zty,\, 0\}.$$
%Clearly we have $x\geq_B xy\geq_B xyz$ and $y\geq_B yz.$  Also, we have that $(zty)z=z,$ so $zty\geq_B z.$  It follows from the relations of the presentation that $xyB=\{xy, xyz, 0\}$ and $xyzB=yzB=zB=\{0\},$ so $$(x, xy), (xy, xyz), (y, yz), (zty, z)\notin\ar_B.$$
%We conclude that the poset of $\ar_S$-classes and the poset of $\ar_B$-classes are as presented in Figure \ref{fig:bi-ideal,n=2} below, so that $\HR(S)=2$ and $\HR(B)=4,$ as desired.
\end{proof}

\begin{center}
\begin{figure}[H]
\begin{tikzpicture}[scale=.6]
\node (R1) at (-4.5,8.8) {$R_1$};
\node (R2) at (-4.5,6.8) {$R_2$};
\node (a) at (-4.5,4.6) {$\vdots$};
\node (a1) at (-4.5,3.8) {~};
\node (b) at (-4.5,2) {$R_{n-1}$};
\node (c) at (-1.5,2) {$S_{n-1}$};
\node (d1) at (-1.5,3.8) {~};
\node (d) at (-1.5,4.6) {$\vdots$};
\node (S2) at (-1.5, 6.8) {$S_2$};
\node (S1) at (-1.5, 8.8) {$S_1$};
\node (zero) at (0,0) {$\{0\}$};
\node (U1) at (1.5,8.8) {$T_1$};
\node (U2) at (1.5,6.8) {$T_2$};
\node (e1) at (1.5,3.8) {~};
\node (e) at (1.5,4.6){$\vdots$};
\node (f) at (1.5,2) {$T_{n-1}$};
\node (g) at (4.5,2) {$U_{n-1}$};
\node (h1) at (4.5,3.8) {~};
\node (h) at (4.5,4.6) {$\vdots$};
\node (V2) at (4.5, 6.8) {$U_2$};
\node (V1) at (4.5, 8.8) {$U_1$};
\draw (R1) -- (R2) -- (a)
(a1) -- (b) -- (zero) -- (c) -- (d1) 
(d) -- (S2) -- (S1)
(U1) -- (U2) -- (e) 
(e1) -- (f) -- (zero) -- (g) -- (h1)
(h) -- (V2) -- (V1);
\end{tikzpicture}
\caption{The poset of $\ar$-classes of the semigroup $S$ given in the statement of Theorem \ref{thm:bi-ideal}.}
\label{fig:bi-ideal}
\end{figure}
\vspace{-2.5em}
\end{center}

\begin{center}
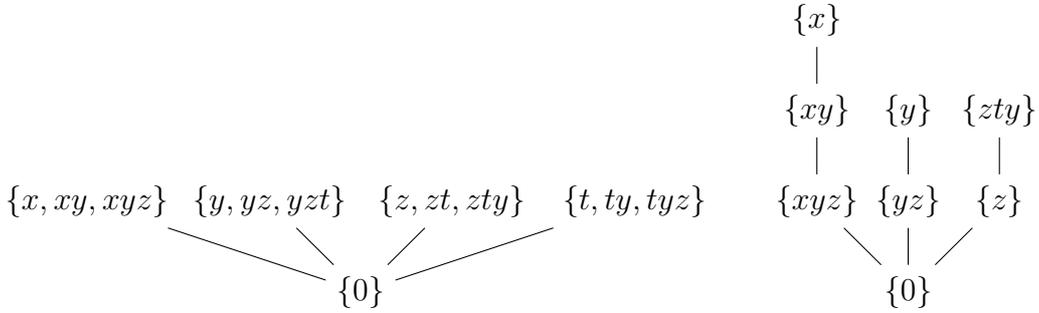
\begin{figure}[H]
\begin{tikzpicture}[scale=.6]
\node (a) at (-6,2) {$\{x, xy, xyz\}$};
\node (b) at (-2,2) {$\{y, yz, yzt\}$};
\node (c) at (2,2) {$\{z, zt, zty\}$};
\node (d) at (6,2) {$\{t, ty, tyz\}$};
\node (zero) at (0,0) {$\{0\}$};
\draw (a) -- (zero) -- (b) 
(c) -- (zero) -- (d);
\node (e) at (12,4) {$\{y\}$};
\node (f) at (12,2) {$\{yz\}$};
\node (g) at (10,6) {$\{x\}$};
\node (h) at (10,4) {$\{xy\}$};
\node (i) at (10,2) {$\{xyz\}$};
\node (j) at (14,2) {$\{z\}$};
\node (k) at (14,4) {$\{zty\}$};
\node (0) at (12,0) {$\{0\}$};
\draw (g) -- (h) -- (i) -- (0) -- (j) -- (k)
(e) -- (f) -- (0);
\end{tikzpicture}
\caption{Let $S$ and $B$ be as given in Theorem \ref{thm:bi-ideal} in the case $n=2.$  The poset of $\ar_S$-classes is displayed on the left, and the poset of $\ar_B$-classes is displayed on the right.}
\label{fig:bi-ideal,n=2}
\end{figure}
\vspace{-1.5em}
\end{center}

We now move on to problem (3).  To solve this, we utilise the following construction.

\begin{defn}
Let $S$ be a semigroup and let $I$ be a non-empty set.  The {\em Brandt extension of $S$ by} $I,$ denoted by $\mathcal{B}(S, I),$ is the semigroup with universe $(I\times S\times I)\cup\{0\}$ and multiplication given by
$$(i, s, j)(k, t, l)=
\begin{cases}
(i, st, l)&\text{ if }j=k\\
0&\text{ otherwise,}
\end{cases}$$
and $0x=x0=0$ for all $x\in(I\times S\times I)\cup\{0\}.$
\end{defn}

\begin{remark}
Brandt extensions of groups, known as {\em Brandt semigroups}, are precisely the completely 0-simple inverse semigroups \cite[Theorem 5.1.8]{Howie:1995}.  The semigroup $S$ from Example \ref{ex:Brandt} is (isomorphic to) the 5-element Brandt semigroup over the trivial group.  We note also that the subsemigroup $\langle y, z, t\rangle$ of the semigroup $S$ from Theorem \ref{thm:bi-ideal} is the 10-element Brandt semigroup over the trivial group.
\end{remark}

\begin{thm}
\label{thm:Brandt}
Let $S$ be a semigroup with finite $\ar$-height, and let $A=aS^1$ be a principal right ideal of $S.$  Let $I$ be any set with $|I|\geq 2,$ and let $T=\mathcal{B}(S, I).$  Fix $1\in I,$ and consider the principal right ideal $B=(1, a, 1)T^1$ of $T.$  Then $\HR(T)=\HR(S)+1$ and $\HR(B)=\HR(A)+2.$
\end{thm}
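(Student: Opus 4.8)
The plan is to compute the principal right ideals in $T$ and in $B$ explicitly, read off the two posets of $\ar$-classes, and then bound the lengths of their chains against those of $S/\ar_S$ and $A/\ar_A$ respectively. A useful preliminary observation, which will be needed throughout, is that possession of a local right identity is an $\ar_S$-class invariant, and that an element $s$ \emph{without} a local right identity satisfies $R_s^S=\{s\}$ (for if $s\,\ar_S\,s'$ with $s\neq s'$, then writing $s'=su$, $s=s'v$ with $u,v\in S^1$ not both equal to $1$ forces $vu\in S$ and hence $s'=s'(vu)$).

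For $\HR(T)=\HR(S)+1$ I would first compute, for a nonzero $(i,s,j)$, that $(i,s,j)T^1=\{(i,s,j)\}\cup(\{i\}\times sS\times I)\cup\{0\}$, so that $(i',s',j')\leq_T(i,s,j)$ holds exactly when $i=i'$ and either $(s',j')=(s,j)$ or $s'\in sS$. This shows that the nonzero part of $T/\ar_T$ consists of the classes $\{i\}\times R_s^S\times I$ (when $s$ has a local right identity) together with singletons $\{(i,s,j)\}$ (otherwise), while $\{0\}$ is the global minimum. I would then check that $[(i,s,j)]\mapsto R_s^S$ is a well-defined order-preserving map which is moreover \emph{strictly} monotone along any chain: using the preliminary observation, if two nonzero classes are strictly comparable their images in $S/\ar_S$ must differ. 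This caps the nonzero part of $T/\ar_T$ at height $\HR(S)$, and adjoining $\{0\}$ gives $\HR(T)\leq\HR(S)+1$. For the reverse inequality I would verify that $s\mapsto(1,s,1)$ induces an order-embedding of $S/\ar_S$ into $T/\ar_T$ (here $(1,s',1)\leq_T(1,s,1)$ reduces exactly to $s'\leq_S s$), and prepend $\{0\}$.

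For $\HR(B)=\HR(A)+2$ I would compute $B=(1,a,1)T^1=\{(1,a,1)\}\cup(\{1\}\times aS\times I)\cup\{0\}$, observing that every nonzero element of $B$ has first coordinate $1$ and that a product $(1,s,l)(1,s',l')$ is nonzero only when $l=1$. This makes the third coordinate decisive. For $l\neq 1$ one gets $(1,s,l)B=\{0\}$, so these \emph{off-spine} elements are singleton $\ar_B$-classes, pairwise incomparable, each covering $\{0\}$; whereas on the \emph{spine} $\{(1,s,1):s\in A\}$ the relation $(1,s,1)\leq_B(1,s',1)$ reduces exactly to $s\leq_A s'$, so the spine classes form an isomorphic copy of $A/\ar_A$ inside $B/\ar_B$. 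For the upper bound, since distinct off-spine classes are incomparable and $\{0\}$ is the minimum, any chain contains at most one off-spine class, at most the zero class, and otherwise a chain of spine classes of length at most $\HR(A)$; hence $\HR(B)\leq\HR(A)+2$. For the lower bound I would take a maximal chain $R^A_{x_1}<_A\cdots<_A R^A_{x_m}$ with $m=\HR(A)$, lift it to the spine, and insert one off-spine class below it: writing $x_1=au$, the element $(1,x_1at,l_0)$ with $t\in S$ and $l_0\neq 1$ (available since $|I|\geq 2$) lies in $B$, has middle coordinate $x_1at=a(uat)\in aS$, and satisfies $(1,x_1at,l_0)<_B(1,x_1,1)$; prepending this and $\{0\}$ yields a chain of length $m+2$.

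The main obstacle I anticipate is pinning down the two $\ar$-relations exactly, in particular the bookkeeping forced by the third coordinate together with the local-right-identity phenomenon. In the $T$-part the delicate point is establishing strict monotonicity of the class map to $S/\ar_S$ despite the extra incomparable classes created by differing third coordinates, which is precisely where the preliminary observation on $\ar_S$-singletons does the work. In the $B$-part the crux is verifying that the spine really is order-isomorphic to $A/\ar_A$ and that one can always squeeze exactly one off-spine class strictly below the spine while no chain can use more than one; this asymmetry between "insertable but only once" is exactly what upgrades the single extra level of the $T$-computation to the two extra levels for $B$.
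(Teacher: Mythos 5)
Your proposal is correct: the computation of $(i,s,j)T^1$, the description of the nonzero $\ar_T$-classes (including the singleton classes coming from elements without local right identities), the spine/off-spine decomposition of $B$, and both chain constructions all check out, and together they prove both equalities. Your route shares its skeleton with the paper's---both compute principal right ideals of $\mathcal{B}(S,I)$ explicitly, and both get the lower bounds by lifting a maximal chain and inserting $\{0\}$ (and, for $B$, one extra element with third coordinate $\neq 1$, exactly your off-spine insertion; the paper uses $(1,a_1as,j)$)---but the upper bounds are handled genuinely differently. The paper records only the strict relation, namely $(i,s,j)<_T(k,t,l)$ if and only if $i=k$ and $s<_St$, and then argues by contradiction: a chain in $T$ of length $\HR(S)+2$ projects to a too-long chain in $S$, while a chain in $B$ of length $\HR(A)+3$ projects to a $\leq_A$-chain in $A$ that by the pigeonhole principle must collapse at two places, and a case analysis on the third coordinate at the later collapse produces the contradiction. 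You instead describe both posets completely: $T/\ar_T$ consists of the classes $\{i\}\times R_s^S\times I$ (plus singletons for elements lacking local right identities) above $\{0\}$, and $B/\ar_B$ is an order-isomorphic copy of $A/\ar_A$ (the spine) together with an antichain of off-spine singletons and $\{0\}$, so any chain uses at most one off-spine class and at most the zero class. Your approach costs slightly more bookkeeping---in particular it needs the (correct) observation that an element without a local right identity is alone in its $\ar$-class, a subtlety the paper never touches because it only ever manipulates the strict order---but it buys the full structure of the two posets and makes transparent why the increment is $+1$ for $T$ yet $+2$ for $B$, whereas the paper's pigeonhole argument is shorter on the page and yields only the heights.
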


\begin{proof}
It can be easily proved that for any $s, t\in S$ and $i, j, k, l\in I,$ we have
\begin{equation}
\label{eqtn:Brandt}
(i, s, j)<_T(k, t, l)\Leftrightarrow i=k\text{ and }s<_St.
\end{equation}
Let $\HR(S)=n.$  Then there exists a chain
$$s_1<_Ss_2<_S\cdots<_Ss_n.$$
Letting $t_i=(1, s_i, 1),$ by (\ref{eqtn:Brandt}) we have a chain
$$0<_Tt_1<_Tt_2<_T\cdots<_Tt_n,$$
so $\HR(T)\geq n+1.$  Now suppose for a contradiction that
$\HR(T)>n+1.$  Then there exists a chain
$$0<_Tx_1<_Tx_2<_T\cdots<_Tx_{n+1}.$$  By (\ref{eqtn:Brandt}) there exists $i\in I$ such that each $x_k$ has the form $(i, y_k, j_k)$ for some $y_k\in S$ and $j_k\in I.$  But then, by (\ref{eqtn:Brandt}), we have a chain
$$y_1<_Sy_2<_S\cdots<_Sy_{n+1},$$
contradicting the fact that $\HR(S)=n.$  Thus $\HR(T)=n+1.$\par
Now let $\HR(A)=m.$  Then there exists a chain
$$a_1<_Aa_2<_A\cdots<_Aa_m.$$
We have that $B=\{(1, a, 1)\}\cup\{(1, as, i) : s\in S, i\in I\}\cup\{0\}.$
Let $b_i=(1, a_i, 1).$  Then $b_i\in B.$  For each $i\in\{1, \dots, m\},$ there exists $c_i\in A$ such that $a_i=a_{i+1}c_i.$  Therefore, we have that $b_i=b_{i+1}(1, c_i, 1)\in b_{i+1}B,$ so $b_i\leq_B b_{i+1}.$  Clearly, if $b_i\,\ar_B\,b_{i+1}$ then $a_i\,\ar_A\,a_{i+1},$ contradicting that $a_i<_Aa_{i+1},$ so $b_i<_Bb_{i+1}.$  Now choose $s\in S$ and $j\in I\!\setminus\!\{1\},$ and let $b_0=(1, a_1(as), j).$  Then $b_0=b_1(1, as, j)\in b_1B.$ Since $j\neq 1,$ we have that $b_0B=\{0\},$ so $b_0<_Bb_1.$  Clearly $0<_B b_0.$  In conclusion, we have a sequence
$$0<_Bb_0<_Bb_1<_Bb_2<_B\cdots<_Bb_m,$$
so $\HR(B)\geq m+2.$  Now suppose for a contradiction that
$\HR(B)>m+2.$  Then there exists a chain
$$0<_Bd_1<_Bd_2<_B\cdots<_Bd_{m+2}.$$  Let $d_i=(1, c_i, j_i).$  Then we have a chain
$$c_1\leq_Ac_2\leq_A\cdots\leq_Ac_{m+2}.$$
Since $\HR(A)=m,$ it follows that there exist $k, l\in\{1, \dots, m+1\}$ with $k<l$ such that $c_k\,\ar_A\,c_{k+1}$ and $c_l\,\ar_A\,c_{l+1}.$  In particular, we have $c_{l+1}\in c_lA^1.$  Since $d_l<_B d_{l+1},$ we must have that $c_l\in c_{l+1}A,$ and hence 
$$c_{l+1}\in c_lA^1\subseteq c_{l+1}AA^1\subseteq c_lA^1AA^1=c_lA.$$ 
So, there exists $u\in A$ such that $c_{l+1}=c_lu.$  We cannot have $j_l=1,$ for then $d_{l+1}=d_l(1, u, j_{l+1})\in d_lB,$ contradicting that $d_l<_Bd_{l+1}.$  But then $d_lB=\{0\},$ contradicting that $d_k\in d_lB.$  We conclude that $\HR(B)=m+2.$
\end{proof}

\begin{cor}
For any $n\in\mathbb{N},$ there exists a finite semigroup $S$ with a principal right ideal $A$ such that $\HR(S)=n$ and $\HR(A)=2n-1.$
\end{cor}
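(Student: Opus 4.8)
The plan is to prove the statement by induction on $n,$ using Theorem \ref{thm:Brandt} as the engine that advances from one value to the next. The crucial feature of that theorem is that the Brandt-extension construction raises the $\ar$-height of the ambient semigroup by exactly $1$ while raising that of the distinguished principal right ideal by exactly $2.$ Consequently the difference $\HR(A)-\HR(S)$ increases by $1$ at each application, which is precisely what is required to convert the identity $\HR(A)=2\HR(S)-1$ into $\HR(A)=2(\HR(S)+1)-1.$ So the equation $\HR(A)=2n-1$ is self-propagating under the construction.

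For the base case $n=1,$ I would take $S$ to be the trivial one-element semigroup and $A=S,$ noting that $A=eS^1$ is a principal right ideal generated by the unique element $e.$ Then $\HR(S)=\HR(A)=1=2(1)-1,$ so the claim holds at $n=1.$

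For the inductive step, suppose that for some $n\in\N$ there is a finite semigroup $S$ with a principal right ideal $A=aS^1$ satisfying $\HR(S)=n$ and $\HR(A)=2n-1.$ I would fix a two-element set $I,$ choose $1\in I,$ and form the Brandt extension $T=\mathcal{B}(S,I)$ together with its principal right ideal $B=(1,a,1)T^1.$ Since $S$ is finite it has finite $\ar$-height, so Theorem \ref{thm:Brandt} applies and yields $\HR(T)=\HR(S)+1=n+1$ and $\HR(B)=\HR(A)+2=(2n-1)+2=2(n+1)-1.$ Moreover the universe of $T$ is $(I\times S\times I)\cup\{0\},$ which has $|I|^2|S|+1$ elements, so $T$ is finite whenever $S$ is; choosing $|I|=2$ keeps every semigroup in the chain finite. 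Taking $S'=T$ and $A'=B$ completes the induction.

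There is essentially no obstacle here: Theorem \ref{thm:Brandt} performs all of the structural work, and what remains is the arithmetic bookkeeping of the two heights together with the routine observation that the Brandt extension of a finite semigroup by a finite index set is again finite. The only point deserving a moment's attention is verifying that the formula $2n-1$ is correctly seeded at $n=1$ and that the increments $(\,+1,\,+2\,)$ align with the target formula at $n+1,$ which they do exactly; so the induction goes through without difficulty.
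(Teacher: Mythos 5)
Your proposal is correct and is essentially the same as the paper's own proof: induction on $n$ with the trivial semigroup as the base case and Theorem \ref{thm:Brandt} supplying the inductive step. Your added remarks (that $A=eS^1$ is indeed principal in the base case, and that $\mathcal{B}(S,I)$ is finite when $S$ and $I$ are) are routine checks the paper leaves implicit.
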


\begin{proof}
We prove the result by induction.  For $n=1,$ take $S$ to be the trivial semigroup and $A=S.$
Now let $n\geq 1,$ and assume that there exists a finite semigroup $S$ with a principal right ideal $A=aS^1$ such that $\HR(S)=n$ and $\HR(A)=2n-1.$  Let $T$ and $B$ be as given in the statement of Theorem \ref{thm:Brandt}.  Then, by Theorem \ref{thm:Brandt}, we have that $\HR(T)=n+1$ and $\HR(B)=(2n-1)+2=2(n+1)-1.$  This completes the proof.
\end{proof}

We now turn our attention to question (4), beginning with the case $n=1.$

\begin{prop}
\label{prop:rightsimple}
Let $S$ be a right simple semigroup with no idempotent (e.g.\ a Baer-Levi semigroup).  (Then $\HR(S)=1.$)  Let $a\in S$ be arbitrary, and consider the principal left ideal $A=S^1a.$  Then the $\ar_A$-classes are $\{a\}$ and $A\!\setminus\!\{a\}=Sa,$ and hence $\HR(A)=2.$
\end{prop}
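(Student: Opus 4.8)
The plan is to directly analyze the $\ar_A$-preorder on the principal left ideal $A=S^1a$, where $S$ is right simple with no idempotent. First I would establish the structure of $A$. Since $S$ is right simple, for any $s\in S$ we have $sS=S$, so in particular $aS=S$; thus $A=S^1a=\{a\}\cup Sa$. I would first verify the claimed decomposition $A\setminus\{a\}=Sa$, which amounts to checking that $a\notin Sa$. This is where the no-idempotent hypothesis enters: if $a\in Sa$, then $a=sa$ for some $s\in S$, and using right simplicity (so $aS=S$, giving some $u$ with $au=a$, or more carefully iterating $a=sa=s^2a=\cdots$) one manufactures an idempotent, contradicting the hypothesis. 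I expect this step — ruling out $a\in Sa$ — to be the main obstacle, since it is the only place the idempotent-free assumption does real work, and the argument must convert the equation $a=sa$ into an actual idempotent in $S$.

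Next I would compute the two-sided structure of the $\ar_A$ relation. The key observation is that $A=S^1a$ is a \emph{left} ideal, and Green's $\ar$-preorder compares \emph{right} ideals generated within $A$. For an element $x\in A$, the principal right ideal $xA^1$ must be examined. I would show that every element of $Sa$ is $\ar_A$-related to every other element of $Sa$, i.e.\ $Sa$ forms a single $\ar_A$-class: given $sa, s'a\in Sa$, right simplicity of $S$ furnishes elements allowing one to write each as a right multiple of the other within $A$. Concretely, for $sa\in Sa$ and any target $s'a$, one uses $aS=S$ to find a suitable factor; the right-ideal-in-$A$ computation $(sa)A^1$ should be seen to equal $Sa$ for every $s$, so all such elements generate the same principal right ideal of $A$.

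Finally I would show that $\{a\}$ is its own $\ar_A$-class sitting strictly above $Sa$ in the poset. Since $a\notin Sa$, we have $a$ is not $\leq_A$ any element of $Sa$ in a way that would merge the classes; and conversely every element of $Sa$ satisfies $sa\in (sa)\cdot$-stuff, while $a\leq_A a$ trivially but $a\not\in (sa)A^1=Sa$ shows $a\not\leq_A sa$. Meanwhile each $sa\leq_A a$ since $sa=s\cdot a\in aA^1$ would need checking — more carefully, $sa\in S^1a$ and one shows $sa\in aA^1$ using that $A^1$ contains enough of $S$ via right simplicity, giving $sa<_A a$. Assembling these, the poset of $\ar_A$-classes is the two-element chain $Sa<_A\{a\}$, so $\HR(A)=2$. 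The routine verifications are the principal-right-ideal computations inside $A$; the genuine content is entirely the idempotent-free argument establishing $a\notin Sa$.
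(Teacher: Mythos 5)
Your overall architecture is sound and, once assembled, matches the paper's: show that $Sa$ is a single $\ar_A$-class (your computation $(sa)A^1=Sa$ via right simplicity is exactly right), show that $a$ cannot lie $\leq_A$-below any element of $Sa$, and show $sa\leq_A a$. However, the one step you yourself flag as "the genuine content" --- converting $a=sa$ into an idempotent --- is precisely where your sketch goes off track, and neither of the two routes you hint at can work. Producing $u$ with $au=a$ is a dead end: \emph{every} element of a right simple semigroup has a local right identity (as the paper notes in Section 2), including in idempotent-free examples such as the Baer-Levi semigroups, so the existence of such a $u$ cannot yield any contradiction. Iterating $a=sa=s^2a=\cdots$ is likewise fruitless: it produces infinitely many equations but no idempotent.

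The repair is short, and it uses right simplicity in the opposite direction from your hints: since $aS$ is a right ideal of $S$ and $S$ is right simple, we have $aS=S$, so we may write $s=at$ for some $t\in S$; then
$$s^2=s(at)=(sa)t=at=s,$$
so $s$ itself is an idempotent, contradicting the hypothesis. Note that this same factorization $s=at$ is also what you need to justify $sa\leq_A a$ (namely $sa=a(ta)\in aA$), a point your third paragraph gestures at but does not pin down. For comparison, the paper manufactures the idempotent slightly differently: it rules out $a\,\ar_A\,ua$ for \emph{every} $u\in S$ at once, by deducing $a\,\ar_A\,a^2$ via transitivity, writing $a=a^2(ya)$ with $y\in S^1$, and checking that $a^2y$ is idempotent; this single contradiction simultaneously gives $a\notin Sa$ and the separation of the two classes. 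Your decomposition-first organization is equally valid --- once $a\notin Sa$ is known, $a\,\ar_A\,sa$ would force $a\in(sa)A^1\subseteq Sa$, a contradiction --- but it stands or falls on the factorization $s=at$, which is the idea missing from your sketch.
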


\begin{proof}
Let $s, t\in S.$  Since $S$ is right simple, there exists $x\in S^1$ such that $s=(ta)x.$  Thus $sa=(ta)(xa)\in(ta)A.$  Similarly, we have $ta\in(sa)A,$ so $sa\,\ar_A\,ta.$  Now suppose for a contradiction that $a\,\ar_A\,ua$ for some $u\in S.$  Then, together with the fact just proved that $ua\,\ar_A\,a^2,$ we have $a\,\ar_A\,a^2$ by transitivity.  Therefore, there exists $y\in S^1$ such that $a=a^2(ya).$  But then $a^2y$ is an idempotent, so we have a contradiction.  Thus the $\ar_A$-classes are $\{a\}$ and $Sa.$  Clearly $a^2<_A a,$ so we conclude that $\HR(A)=2.$
\end{proof} 

\begin{thm}
\label{thm:leftideal}
Let $S$ be a semigroup with finite $\ar$-height, and let $A$ be a left ideal of $S.$  Let $T$ be any right simple semigroup with no idempotent, and let $U$ be the semigroup defined by the presentation
$$\langle S\cup T\,|\,ab=a\cdot b,\, cd=c\cdot d,\; ac=c\;\, (a, b\in S, c, d\in T)\rangle.$$
Fix $c\in T,$ and let $B$ denote the left ideal $T^1(A\cup\{c\})$ of $U.$
Then $\HR(U)=\HR(S)+1$ and $\HR(B)=\HR(A)+2.$
\end{thm}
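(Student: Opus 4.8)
The plan is to follow the blueprint of Theorem~\ref{thm:Brandt}: first pin down the elements and multiplication of $U$ via a normal form, and then read off the two posets of $\ar$-classes. First I would treat the presentation as a rewriting system over the alphabet $S\cup T$, with rules $ab\to a\cdot b$ and $cd\to c\cdot d$ (products taken in $S$, resp.\ $T$) together with $ac\to c$, for $a,b\in S$ and $c,d\in T$. Every rule is length-reducing, so the system is noetherian, and a routine check of the critical pairs (the overlaps $a_1a_2c$ and $ac_1c_2$, and the associativity overlaps within $S$ and within $T$) shows they all resolve; hence the system is complete. The irreducible words are then exactly $S\cup T\cup TS$, where $TS=\{ts:t\in T,\,s\in S\}$, with $S$ and $T$ embedding. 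The salient features of the induced multiplication are $s\cdot t=t$, $t\cdot s=ts$, $(ts)\cdot s'=t(ss')$, $(ts)\cdot t'=tt'$, and $x\cdot(ts)=ts$ for $x\in S$: an $S$-element on the left of anything beginning with a $T$-letter is absorbed.

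Using this, I would compute principal right ideals. Right-simplicity of $T$ gives $tT=T$ for every $t\in T$, whence $(ts)t'=tt'$ ranges over all of $T$; this yields $t\,\ar_U\,ts$, so $T\cup TS$ collapses to a single $\ar_U$-class. For $a\in S$ one finds $aU^1=aS^1\cup T\cup TS$, so $\leq_U$ restricts on $S$ to $\leq_S$, giving $\ar_U\!\restriction_S\,=\ar_S$; moreover every $x\in T\cup TS$ satisfies $x\leq_U a$ while $a\not\leq_U x$ (since $xU^1\subseteq T\cup TS$ contains no $S$-element), so the class $T\cup TS$ lies strictly below every $S$-class. Hence $U/\ar$ is a copy of $S/\ar$ with a single new global minimum adjoined, and $\HR(U)=\HR(S)+1$.

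Turning to $B$, from the normal form one has $B=T^1(A\cup\{c\})=A\cup\{c\}\cup TA\cup Tc$, and $UB\subseteq B$ follows by a direct check using that $A$ is a left ideal (so $SA\subseteq A$). I would then determine $\ar_B$ in three parts. For $a,a'\in A$ one computes $aB^1=aA^1\cup\{c\}\cup TA\cup Tc$, so $\leq_B$ restricts on $A$ to $\leq_A$ and $\ar_B\!\restriction_A\,=\ar_A$, contributing a maximal chain of length $m=\HR(A)$. Right-simplicity gives $cT=T$ and $tT=T$; these force both $TA$ and $Tc$ to merge into a single $\ar_B$-class $Z$, which is the global minimum of $B/\ar$, and they also give $x\leq_B a$ for every $x\in\{c\}\cup Z$ and $a\in A$, while no $A$-element lies in $cB^1\subseteq\{c\}\cup Z$, so each $A$-class lies strictly above $\{c\}$.

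The delicate point — and the main obstacle — is to show that $\{c\}$ is a singleton $\ar_B$-class lying strictly \emph{between} the $A$-part and the bottom class $Z$, rather than sinking into $Z$. This is exactly where the hypothesis that $T$ has no idempotent enters, mimicking Proposition~\ref{prop:rightsimple}: if $c=tc$ for some $t\in T$ then, writing $t=cu$ with $u\in T$ (possible since $cT=T$), one obtains $c=cuc$ and hence $(uc)^2=uc$, an idempotent of $T$, a contradiction. Thus $c\notin Tc$, which forces $c\not\leq_B x$ for every $x\in Z$ and so $Z<_B\{c\}$ strictly. Combining the three parts, a maximal chain of $\ar_B$-classes is obtained from a maximal chain in $A$ (length $m$) by adjoining $\{c\}$ and then $Z$ below it, and since $\{c\}$ and $Z$ are the only non-$A$ classes no chain can be longer; therefore $\HR(B)=m+2=\HR(A)+2$.
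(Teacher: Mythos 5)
Your proposal is correct and follows essentially the same route as the paper's proof: establish the normal form $S\cup T\cup TS$, show that the bottom of $U/\ar$ is the single class $T\cup TS$ sitting below a copy of $S/\ar$, and then show that $B/\ar$ is a copy of $A/\ar$ with the singleton class $\{c\}$ and the minimum class $TA\cup Tc$ adjoined below, using right simplicity to merge $TA\cup Tc$ and the absence of idempotents in $T$ to keep $c$ out of that bottom class. Your variant of the idempotent argument (deriving $c\notin Tc$ directly from $t=cu$) is equivalent to the paper's (supposing $c\,\ar_B\,c^2$ and writing $c=c^2wc$), and your explicit verification of the rewriting system only spells out what the paper asserts.
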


\begin{proof}
The semigroup $U$ has a normal form $S\cup T\cup TS.$  Let $K=T\cup TS.$  It is easy to see that $K$ is an ideal of $U.$  All elements of $T$ are $\ar_U$-related since $T$ is right simple, and for any $a\in S$ and $t\in T$ we have that $ta=t\cdot a$ and $(ta)T=tT=T.$  Thus $K$ is an $\ar$-class of $U.$  It follows that $K$ is the minimal ideal of $U.$  Now, since $U\!\setminus\!S=K$ is an ideal, it follows that the restriction of $\leq_U$ to $S$ is $\leq_S$.  Thus the poset of $\ar_U$-classes can be viewed as the poset of $\ar_S$-classes along with the minimum element $K.$  This is depicted in Figure \ref{fig:leftideal} below.  It follows that $\HR(U)=\HR(S)+1.$

We now consider the left ideal $B$ of $U.$  We have $B\cap S=A.$  Since $B\!\setminus\!A$ is an ideal, the restriction of $\leq_B$ to $A$ is $\leq_A$.  
We claim that the sets $\{c\}$ and $TA\cup Tc$ are $\ar$-classes of $B.$
First, let $t, t^{\prime}\in T$ and $a, a^{\prime}\in A.$  Since $T$ is right simple, there exist $x, y, z\in T$ such that
$$t=t^{\prime}x,\; t^{\prime}=ty,\; t=(t^{\prime}c)z.$$ 
(We can assume that $x, y, z\in T$ even if $t=t^{\prime}$ or $t=t^{\prime}c$, since every element of $T$ has a local right identity.)
Using the defining relations $a^{\prime}x=x$ and $ay=y,$ we deduce that
$$ta=(t^{\prime}a^{\prime})(xa),\; t^{\prime}a^{\prime}=(ta)(ya)\;\text{ and }\;ta=(t^{\prime}c)(za),\; t^{\prime}c=(ta)(yc);$$
so $ta\,\ar_B\,t^{\prime}a^{\prime}$ and $ta\,\ar_B\,t^{\prime}c.$
Since $t, t^{\prime}, a, a^{\prime}$ were chosen arbitrarily, it follows by transitivity that all elements in $TA\cup Tc$ are $\ar_B$-related.  Now suppose for a contradiction that $c\,\ar_B\,c^2.$  Then there exists $b\in B$ such that $c=c^2b.$  We cannot have $b\in T^1A,$ for this would imply that $c\in T\cap TS,$ contradicting the fact that $S\cup T\cup TS$ is a normal form for $U.$  Thus $b=wc$ for some $w\in T^1,$ and hence $c=c^2wc.$  But then $c^2w$ is an idempotent of $T,$ so we have a contradiction.  This proves the claim.

For any $a\in A$ we have $ac=c,$ so $c<_B a.$  Also, we have $c^2<_B c.$  Thus the poset of $\ar_B$-classes can be viewed as the poset of $\ar_A$-classes along with the elements $\{c\}$ and $TA\cup TC,$ where $\{c\}$ is below all the $\ar_A$-classes and $TA\cup TC$ is the minimum element; see Figure \ref{fig:leftideal} for an illustration.  It follows that $\HR(B)=\HR(A)+2.$
%In particular, there exists a chain $$a_1>_B a_2>_B\cdots>_B a_m$$ where each $a_i\in A.$  Now, we have that $a_mc=c.$  Since $cB\subseteq K,$ it follows that $a_m>_B c.$  Clearly $c\geq_B c^2.$  Suppose that $c\,\ar_B\,c^2.$  Then $c=c^2b$ for some $b\in B.$  We cannot have $b\in T^1A,$ for then we would have $c=c^2b\in T\cap TS,$ contradicting the fact $S\cup T\cup TS$ is a normal form for $U.$  Thus $b\in tc$ for some $t\in T^1.$  Then $c\in c^2tc,$ and hence $c^2t$ is an idempotent in $T.$  But a right simple semigroup with an idempotent is completely simple, so we have a contradiction.  Thus $c>_B c^2.$  In conclusion, we have a sequence $$a_1>_B a_2>_B\cdots>_B a_m>_B c>_B c^2,$$ so that $\HR(B)\geq m+2.$\par
%Now suppose for a contradiction that $\HR(B)>m+2.$  Then there exists a sequence $$b_1<_B b_2<_B\cdots<_B b_{m+3}.$$  Since the restriction of $\leq_{\mathcal{R}_B}$ to $S$ is $\leq_{\mathcal{R}_A},$ and $\HR(A)=m,$ it follows that $b_1, b_2, b_3\in K.$  Since $K$ is an $\ar_U$-class, we have that $b_3\in b_1U.$  Thus, we obtain $$b_2\in b_3B\subseteq b_1UB\subseteq b_1B,$$ using the fact that $B$ is a left ideal of $U.$  But then $b_1\,\ar_B\,b_2$ and we have a contradiction.  This completes the proof.
\end{proof}

\begin{center}
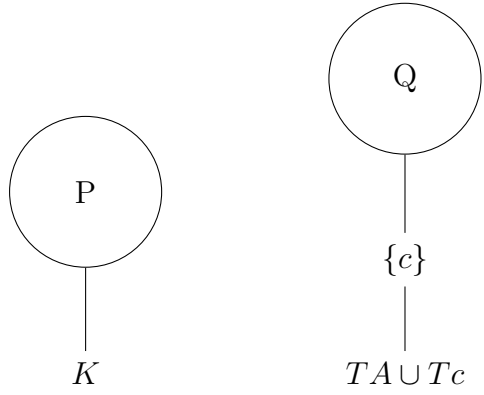
\begin{figure}[H]
\begin{tikzpicture}[scale=.6]
  \node[circle,draw,inner sep=0pt,minimum size=2cm] (a) at (0,4) {P};
  \node (b) at (0,0) {$K$};
  \draw (a) -- (b);
  \node[circle,draw,inner sep=0pt,minimum size=2cm] (c) at (7,6.5) {Q};
  \node (d) at (7,2.5) {$\{c\}$};
  \node (e) at (7,0) {$TA\cup Tc$};
\draw (c) -- (d) -- (e) ;
\end{tikzpicture}
\caption{Let $S,$ $A,$ $U$ and $B$ be as given in Theorem \ref{thm:leftideal}.  The poset of $\ar_U$-classes is displayed on the left, where $P$ denotes the poset of $\ar_S$-classes.  The poset of the $\ar_B$-classes is displayed on the right, where $Q$ denotes the poset of $\ar_A$-classes.}
\label{fig:leftideal}
\end{figure}
\vspace{-1.5em}
\end{center}

\begin{cor}
For any $n\in\mathbb{N},$ there exists a semigroup $S$ with a left ideal $A$ such that $\HR(S)=n$ and $\HR(A)=2n.$
\end{cor}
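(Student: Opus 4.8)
The plan is to prove this by induction on $n,$ exactly mirroring the proof of the corollary following Theorem \ref{thm:Brandt}. The two ingredients needed are already in hand: Proposition \ref{prop:rightsimple} supplies the base case, and Theorem \ref{thm:leftideal} supplies the inductive step by bumping both heights in the correct proportion. So the whole argument is really just an assembly of these two earlier results.

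For the base case $n=1,$ I would invoke Proposition \ref{prop:rightsimple}. Take $T$ to be a right simple semigroup with no idempotent (such semigroups exist, e.g.\ a Baer–Levi semigroup), fix any $a\in T,$ and set $S=T$ and $A=S^1a.$ Then Proposition \ref{prop:rightsimple} gives $\HR(S)=1$ and $\HR(A)=2=2\cdot 1,$ so the statement holds for $n=1.$

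For the inductive step, suppose $n\geq 1$ and that there exists a semigroup $S$ with a left ideal $A$ satisfying $\HR(S)=n$ and $\HR(A)=2n.$ I would then feed this pair into Theorem \ref{thm:leftideal}: letting $T$ be a right simple semigroup with no idempotent, form the semigroup $U$ defined by the presentation in that theorem, fix $c\in T,$ and let $B=T^1(A\cup\{c\})$ be the corresponding left ideal of $U.$ Theorem \ref{thm:leftideal} yields
$$\HR(U)=\HR(S)+1=n+1\quad\text{and}\quad\HR(B)=\HR(A)+2=2n+2=2(n+1),$$
which is precisely the required statement for $n+1.$ This closes the induction.

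Since both the base case and the inductive step are direct applications of results proved earlier in the paper, there is no genuine obstacle here; the only point requiring a moment's care is confirming that a right simple semigroup with no idempotent actually exists (so that the base case and the choice of $T$ in the inductive step are legitimate), and this is guaranteed by the Baer–Levi semigroup already cited in Proposition \ref{prop:rightsimple}.
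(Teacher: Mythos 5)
Your proof is correct and is essentially identical to the paper's own argument: the same induction with Proposition \ref{prop:rightsimple} as the base case and Theorem \ref{thm:leftideal} supplying the inductive step. No gaps; your extra remark about the existence of a right simple semigroup with no idempotent (via the Baer--Levi semigroup) is a fair point of care that the paper leaves implicit.
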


\begin{proof}
We prove the result by induction.  Proposition \ref{prop:rightsimple} deals with the base case.
Now let $n\geq 1,$ and assume that there exists a semigroup $S$ with a left ideal $A$ such that $\HR(S)=n$ and $\HR(A)=2n.$  Let $U$ and $B$ be as given in the statement of Theorem \ref{thm:leftideal}.  Then, by Theorem \ref{thm:leftideal}, we have $\HR(U)=n+1$ and $\HR(B)=2n+2=2(n+1).$  This completes the proof.
\end{proof}

Finally, we solve problem (5) with the following result, the case $n=1$ being trivial.

\begin{thm}
\label{thm:leftideal,CS}
Let $n\geq 2.$  Let $S$ be the finite semigroup defined by the presentation
$$\langle x, y, z\,|\,xyz=x,\, yzy=y,\, zyz=z,\, w=0\;\, (w\in\{x^n,\, y^2,\, z^2,\, xz,\, yx,\, zx^{n-1}\})\rangle,$$
and let $A=S^1\{x, y\}.$  Then $\HR(S)=n$ and $\HR(A)=2n-1.$
\end{thm}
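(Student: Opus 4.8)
The plan is to mirror the proof of Theorem~\ref{thm:bi-ideal}: extract a normal form for $S$ from a complete rewriting system, read off the poset of $\ar_S$-classes to obtain $\HR(S)=n$ together with a completely simple kernel, and then show that passing to the left ideal $A$ splits each of the $\ar_S$-classes along the ``$x$-chain'', doubling its length.

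First I would verify that the rewriting system associated with the presentation is complete. It is noetherian since every rule is length-reducing: the three relations $xyz\to x$, $yzy\to y$, $zyz\to z$ shorten words, and every other rule sends a word to $0$. Confluence then follows by checking that the critical pairs resolve; these come from the short overlaps among the left-hand sides $xyz,\,yzy,\,zyz,\,x^n,\,y^2,\,z^2,\,xz,\,yx,\,zx^{n-1}$, and each collapses either to a common word in $\langle y,z\rangle$ or to $0$ (for example $xzyz\to 0$ directly via $xz\to0$, while applying $zyz\to z$ first gives $xz$, which again reduces to $0$). A routine analysis of which generators may be adjacent in an irreducible word then yields the normal form, consisting of the words $x^i,x^iy$ for $1\le i\le n-1$, the words $y,yz,z,zy$, the words $yzx^i,yzx^iy,zx^i,zx^iy$ for $1\le i\le n-2$, and $0$; here $\langle y,z\rangle$ is the five-element Brandt semigroup over the trivial group.

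Next I would determine the poset of $\ar_S$-classes. The relation $xyz=x$ makes $yz$ a right identity for each power of $x$, so $x^i\cdot yz=x^i$; together with $x^iy\cdot z=x^i$ this gives $x^i\,\ar_S\,x^iy$, and similarly $y\,\ar_S\,yz$, $z\,\ar_S\,zy$, $yzx^i\,\ar_S\,yzx^iy$ and $zx^i\,\ar_S\,zx^iy$. Since reduction preserves the leading generator of a word or sends it to $0$, every nonzero element of $wS^1$ begins with the same generator as $w$; hence words with distinct initial generators generate incomparable principal right ideals. A short ideal computation (e.g.\ $x^{i+1}<_S x^i$) then shows that the poset consists of three chains of $\ar_S$-classes, each of length $n$, glued at their common minimum $\{0\}$. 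Thus $\HR(S)=n$ and $K(S)=\{0\}$, which is (trivially) completely simple, so Theorem~\ref{thm:leftideal,CS,bound} will be available.

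Finally I would analyse $A=S^1\{x,y\}$. Computing the products $sx$ and $sy$ shows $A=S\setminus\{z,yz\}$. The crux is that deleting $z$ splits each class $\{x^i,x^iy\}$: one still has $x^iy=x^i\cdot y\in x^iA$, so $x^iy\leq_A x^i$, but in $S$ the only way back up was $x^iy\cdot z=x^i$, and $z\notin A$. The elements of $A$ that begin with $z$ are exactly $zy$ and $zx^j,zx^jy$, never $z$ itself, and the products
$$x^iy\cdot zy=x^iy,\qquad x^iy\cdot zx^j=x^{i+j},\qquad x^iy\cdot zx^jy=x^{i+j}y$$
never equal $x^i$; hence $x^i\notin x^iyA^1$ and so $x^iy<_A x^i$. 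The same computation gives $x^{i+1}<_A x^iy$, producing the chain
$$x>_A xy>_A x^2>_A x^2y>_A\cdots>_A x^{n-1}>_A x^{n-1}y>_A 0$$
of $2n-1$ distinct $\ar_A$-classes, so $\HR(A)\geq 2n-1$. Since $K(S)=\{0\}$ is completely simple and the longest chain of $\ar_S$-classes meeting $A$ has length $n$, Theorem~\ref{thm:leftideal,CS,bound} gives $\HR(A)\leq 2n-1$, whence equality. I expect the main obstacle to lie in the bookkeeping for completeness of the rewriting system and, more essentially, in confirming that no product $x^iy\cdot a$ with $a\in A$ recovers $x^i$---the exact point where removing $z$ doubles the $\ar$-height.
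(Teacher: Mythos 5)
Your proposal is correct and follows essentially the same route as the paper's proof: establish completeness of the rewriting system, extract the normal form, identify the poset of $\ar_S$-classes as three chains of length $n$ meeting at $\{0\}$ (giving $\HR(S)=n$ with kernel $\{0\}$), exhibit the explicit chain $x>_A xy>_A x^2>_A\cdots>_A x^{n-1}>_A x^{n-1}y>_A 0$ via the computation of $x^iyA$, and close with the upper bound from Theorem \ref{thm:leftideal,CS,bound}. The only cosmetic difference is that you spell out a few details the paper leaves implicit (e.g.\ that $K(S)=\{0\}$ is completely simple, and the product computations showing $x^i\notin x^iyA^1$).
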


\begin{proof}
The proof of this result is similar to that of Theorem \ref{thm:bi-ideal}, so we will not go into as much detail.\par
The associated rewriting system of the presentation for $S$ is complete, yielding the following normal form for $S$:
$$\{x^i,\, x^iy : 1\leq i\leq n-1\}\cup\{yzx^j,\, yzx^jy,\, zx^j,\,  zx^jy  : 0\leq j\leq n-2\}\cup\{0\},$$
It is straightforward to calculate that $|S|=6(n-1)+1.$  It can also be easily shown that $A=S\!\setminus\!\{z, yz\}.$\par
Let $R_i=\{x^i, x^iy\}$ for $i\in\{1, \dots, n-1\}.$  Let $S_1=\{y, yz\}$ and $T_1=\{z, zy\},$ and for $i\in\{2, \dots, n-1\}$ let 
$S_i=yzR_{i-1}$ and $T_i=zR_{i-1}.$  Then each $R_i$, $S_i$ and $T_i$ is an $\ar_S$-class.  The poset of $\ar_S$-classes is as displayed in Figure \ref{fig:leftideal,CS} below, so that $\HR(S)=n.$  Turning our attention to $A,$ we have
$$x^i\geq_A x^iy\geq_A x^iy(zx)=x^{i+1}.$$
We certainly have $x^iy>_A x^{i+1}$ since $x^iy>_S x^{i+1}.$  Also, we have 
$$x^iyA=\{x^iy,\, x^j,\, x^jy,\, 0 : i+1\leq j\leq n-1\},$$
so $x^i>_A x^iy.$  Thus we have a chain
$$x>_A xy>_A x^2>_A x^2y>_A\cdots>_A x^{n-1}>_A x^{n-1}y>_A 0,$$
so $\HR(A)\geq 2n-1.$  By Theorem \ref{thm:leftideal,CS,bound}, we have $\HR(A)\leq 2n-1.$  We conclude that $\HR(A)=2n-1.$
\end{proof}

\begin{center}
\begin{figure}[H]
\begin{tikzpicture}[scale=.6]
\node (R1) at (-4,8.8) {$R_1$};
\node (R2) at (-4,6.8) {$R_2$};
\node (a) at (-4,4.6) {$\vdots$};
\node (a1) at (-4,3.8) {~};
\node (b) at (-4,2) {$R_{n-1}$};
\node (c) at (0,2) {$S_{n-1}$};
\node (d1) at (0,3.8) {~};
\node (d) at (0,4.6) {$\vdots$};
\node (S2) at (0, 6.8) {$S_2$};
\node (S1) at (0, 8.8) {$S_1$};
\node (zero) at (0,0) {$\{0\}$};
\node (T1) at (4,8.8) {$T_1$};
\node (T2) at (4,6.8) {$T_2$};
\node (e1) at (4,3.8) {~};
\node (e) at (4,4.6){$\vdots$};
\node (f) at (4,2) {$T_{n-1}$};
\draw (R1) -- (R2) -- (a)
(a1) -- (b) -- (zero) -- (c) -- (d1) 
(d) -- (S2) -- (S1)
(T1) -- (T2) -- (e) 
(e1) -- (f) -- (zero);
\end{tikzpicture}
\caption{The poset of $\ar$-classes of the semigroup $S$ given in the statement of Theorem \ref{thm:leftideal,CS}.}
\label{fig:leftideal,CS}
\end{figure}
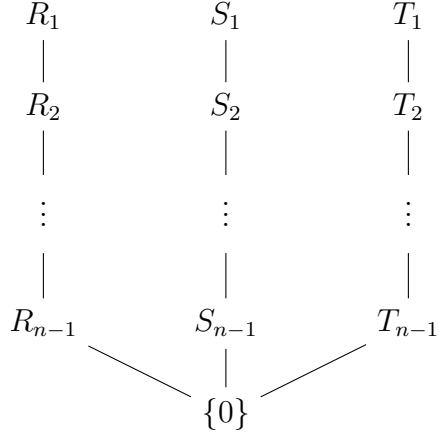
\vspace{-2em}
\end{center}

\begin{center}
\begin{figure}[H]
\begin{tikzpicture}[scale=.6]
\node (a) at (-3,2) {$\{x, xy\}$};
\node (b) at (0,2) {$\{y, yz\}$};
\node (c) at (3,2) {$\{z, zy\}$};
\node (zero) at (0,0) {$\{0\}$};
\draw (a) -- (zero) -- (b)
(zero) -- (c);
\node (d) at (7,4) {$\{x\}$};
\node (e) at (7,2) {$\{xy\}$};
\node (f) at (9,2) {$\{y\}$};
\node (g) at (11,2) {$\{zy\}$};
\node (0) at (9,0) {$\{0\}$};
\draw (d) -- (e) -- (0) -- (f)
(g) -- (0);
\end{tikzpicture}
\caption{Let $S$ and $A$ be as given in Theorem \ref{thm:leftideal,CS} in the case $n=2.$  The poset of $\ar_S$-classes is displayed on the left, and the poset of $\ar_A$-classes is displayed on the right.}\label{fig:leftideal,CS,n=2}
\end{figure}
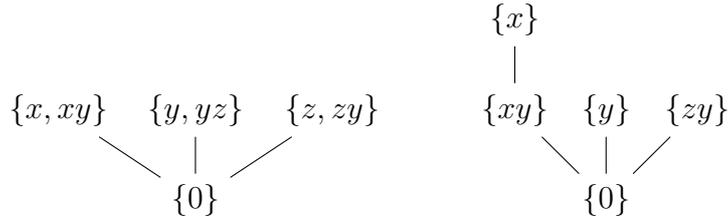
\vspace{-1.5em}
\end{center}

\section{Open Problems and Future Research}
\label{sec:conclusion}

As with the $\ar$-height, one can of course define the $\el$-height, $\h$-height and $\jay$-height of a semigroup $S,$ which we denote by $\HL(S),$ $\HH(S)$ and $\HJ(S),$ respectively.  It would potentially be interesting to consider the relationship between these heights.  We note that for stable semigroups $S,$ since any two $\ar$-classes within the same $\jay(=\dee)$-class are incomparable, we have $\HR(S)\leq\HJ(S).$  It is easy to find stable semigroups for which $\HR(S)=\HJ(S).$  Indeed, for any finite full transformation semigroup $S=\mathcal{T}_n$, we have $\HR(S)=\HL(S)=\HH(S)=\HJ(S)=n.$  On the other hand, if $S$ is the semigroup from Theorem \ref{thm:leftideal,CS} (which is stable since it is finite), then $\HR(S)=n$ by that theorem, but it turns out that $\HJ(S)=2n-1$; indeed, the $\jay$-classes of $S$ form a chain
$$J_1>K_1>\cdots>J_{n-1}>K_{n-1}>\{0\},$$
where $J_i=\{yzx^{i-1}, yzx^{i-1}y, zx^{i-1}, zx^{i-1}y\}$ and $K_i=\{x^i, x^iy\}$ for $1\leq i\leq n-1.$
However, it is not the case that $\HR(S)\leq\HJ(S)$ for every semigroup $S.$  For example, for the bicyclic monoid $B$ we have $\HJ(B)=1$ and $\HR(B)=\infty.$  We raise the following question.

\begin{prob}
Is there a general upper bound for $\HJ(S)$ in terms of $\HR(S)$?
\end{prob}

It is perhaps also worth considering the relationship between the $\jay$-height of a semigroup and that of its bi-ideals, one-sided ideals, etc.
In particular, we ask:

\begin{prob}
Is the property of having finite $\jay$-height inherited by bi-ideals?
\end{prob}

Another possible direction for future research would be to study the $\ar$-height more systematically.  In particular, one could consider the $\ar$-height of semigroups lying in certain special classes, such as regular semigroups.  We note that for an inverse semigroup $S,$ we have $\HR(S)(=\HL(S)=\HH(S))$ is equal to the height of the semilattice $E(S)$ of idempotents of $S.$  Moreoever, one could investigate the behaviour of the $\ar$-height under various semigroup-theoretic constructions, such as quotients, ideal extensions, direct products, free products, etc.

\section*{Acknowledgements}
This work was supported by the Engineering and Physical Sciences Research Council [EP/V002953/1].  The author would like to thank the referee for a careful reading of the paper, and for observations and questions that led to Section \ref{sec:conclusion}.

\end{document}